\newtheorem{Thm}{Theorem}{\bfseries}{\itshape}
\newtheorem*{Thm*}{Theorem}{\bfseries}{\itshape}
\newtheorem{Cor}{Corollary}{\bfseries}{\itshape}
\newtheorem{Prop}[Cor]{Proposition}{\bfseries}{\itshape}
\newtheorem*{Prop*}{Proposition}{\bfseries}{\itshape}
\newtheorem{Lem}[Cor]{Lemma}{\bfseries}{\itshape}
\newtheorem*{Lem*}{Lemma}{\bfseries}{\itshape}
{\bfseries}{\itshape}
{\bfseries}{\itshape}
\newtheorem{Def}[Cor]{Definition}{\bfseries}{\rmfamily}
{\scshape}{\rmfamily}
\newtheorem{Rem}[Cor]{Remark}{\scshape}{\rmfamily}
{\bfseries}{\itshape}
\renewcommand\ge{\geqslant} \renewcommand\le{\leqslant}
\let\tildeaccent=\~ \let\hataccent=\^
\renewcommand\~[1]{\widetilde{#1}}
\def\<{\left<} \def\>{\right>} \def\({\left(} \def\){\right)}
\let\parasymbol=\S \def\secref#1{\parasymbol\ref{#1}}
 \def\pd#1#2{\tfrac{\partial#1}{\partial#2}}
\let\polishL=l \def\Zoladek.{\.Zol\c adek}
 \def\const{\operatorname{const}}
\def\Re{\operatorname{Re}} \def\Im{\operatorname{Im}}
\def\GL{\operatorname{GL}} \def\SL{\operatorname{SL}}
\def\PGL{\operatorname{PGL}}
\def\etc.{\emph{etc}.}
\def\:{\colon} \def\R{{\mathbb R}} \def\C{{\mathbb C}} \def\Z{{\mathbb
    Z}} \def\N{{\mathbb N}} \def\Q{{\mathbb Q}} \def\P{{\mathbb P}}
\def\H{{\mathbb H}}
\def\F{{\mathbb F}}
\def\A{{\mathbb A}}
\let\PolishL=\L 
\def\L{{\mathbb L}}
 \def\e{\varepsilon} \def\S{\varSigma}
\def\poly{{\operatorname{poly}}}
 \def\d{\,\mathrm d}
 \def\Lojas.{\PolishL ojasiewicz}
 \def\cR{{\mathcal R}}
\def\cF{{\mathcal F}}  \def\cR{{\mathcal R}}
\def\cO{{\mathcal O}}
\def\rest#1{{\vert_{#1}}}
\def\Gal{\operatorname{Gal}}
\def\id{\operatorname{id}}
\def\O{\mathcal{O}}
\def\alg{\mathrm{alg}}
\def\trans{\mathrm{trans}}
\def\disc{\operatorname{disc}}
\def\Pic{\operatorname{Pic}}
\def\^#1{^{(#1)}{}}
\def\tf{{\mathrm{tf}}}
\def\sp{{\mathrm{sp}}}
\def\spp{{\mathrm{spp}}}
\def\wsp{{\mathrm{wsp}}}
\def\^#1{^{(#1)}{}}
\begin{document}

\makeatletter
\let\@wraptoccontribs\wraptoccontribs
\makeatother

\title{Some effective estimates for Andr\'e-Oort in $Y(1)^n$}

\author{Gal Binyamini}
\thanks{G.B. is the incumbent of the Dr. A. Edward Friedmann career
  development chair in mathematics. This project has received funding
  from the European Research Council (ERC) under the European Union's
  Horizon 2020 research and innovation programme (grant agreement No
  802107). This research was supported by the ISRAEL SCIENCE
  FOUNDATION (grant No. 1167/17) and by funding received from the
  MINERVA Stiftung with the funds from the BMBF of the Federal
  Republic of Germany.}
      
\contrib[with an appendix by]{Emmanuel Kowalski}

\address{Weizmann Institute of Science, Rehovot, Israel}
\email{gal.binyamini@weizmann.ac.il}

\begin{abstract}
  Let $X\subset Y(1)^n$ be a subvariety defined over a number field
  $\F$ and let $(P_1,\ldots,P_n)\in X$ be a special point not
  contained in a positive-dimensional special subvariety of $X$. We
  show that the if a coordinate $P_i$ corresponds to an order not
  contained in a single exceptional Siegel-Tatuzawa imaginary
  quadratic field $K_*$ then the associated discriminant
  $|\Delta(P_i)|$ is bounded by an effective constant depending only
  on $\deg X$ and $[\F:\Q]$. We derive analogous effective results for
  the positive-dimensional maximal special subvarieties.

  From the main theorem we deduce various effective results of
  Andr\'e-Oort type. In particular we define a genericity condition on
  the leading homogeneous part of a polynomial, and give a fully
  effective Andr\'e-Oort statement for hypersurfaces defined by
  polynomials satisfying this condition.
\end{abstract}

\subjclass[2010]{11G18 (primary), 11G50, 14G35, 03C98 }

\date{September 14, 2018}
\maketitle

\section{Introduction}

\subsection{Notations}

We identify $Y(1)\simeq \A_\Q^1$ be means of the $j$-invariant. A
point in $p\in Y(1)$ is said to be \emph{special} if the corresponding
elliptic curve $E_p$ admits complex multiplication. In this case we
denote by $K(p)$ the quadratic field generated by the periods $E_p$,
and by $\Delta(p)$ the discriminant of the endomorphism ring of $E_p$
in the ring of integers of $K$ (see~\secref{sec:special-points}).  A
point $P=(P_1,\ldots,P_n)\in Y(1)^n$ is called special if each
coordinate $P_i$ is special. We denote $D(P):=\max_i|\Delta(P_i)|$.

For every $N>1$ there is a \emph{modular polynomial}
$\Phi_N(x,y)\in\Z[x,y]$ whose zero locus in $Y(1)^2$ is the set of
pairs $(x,y)$ corresponding to $N$-isogenous elliptic curves. Let
$S_0\cup\cdots\cup S_w$ be a partition of $\{1,\ldots,n\}$ with $S_0$
only permitted to be empty. Let $p_i\in Y(1)$ be special for
$i\in S_0$. Let $s_i\in S_i$ be minimal for $i>0$ and for each
$s_i\neq j\in S_i$ choose a positive integer $N_{ij}$. Then a
$w$-dimensional \emph{special subvariety} $V$ of $Y(1)^n$ is an
irreducible component of the variety
\begin{multline}\label{eq:special-variety}
  \{ (y_1,\ldots,y_n)\in X : y_i=p_i, i\in S_0,\\
  \Phi_{N_{ij}}(y_{s_i},y_j)=0, s_i\neq j\in S_i, i=1,\ldots, w\}.
\end{multline}
Following \cite{pila:andre-oort}, we will call a special variety
\emph{strongly special} if $S_0=\emptyset$. We denote $P(V):=(p_i)_{i\in S_0}$
and set $D(V)=D(P(V))$. If $V$ is defined as above but without the
condition that $p_i$ is special we say that $V$ is
\emph{weakly-special}.

\subsection{Results for general varieties}

To state our main result, we let $K_*$ denote a universally fixed
quadratic field and $\Delta_*$ the discriminant of its ring of
integers. The field $K_*$ arises from the theorem of Siegel-Tatuzawa
and its definition is given in~\secref{sec:siegel-tatuzawa}. We note
that it is possible that such an exceptional field does not exist (for
instance, this is the case if one assumes the generalized Riemann
hypethesis for imaginary quadratic fields). In this case we may
formally set $K_*=\emptyset$ below, and our results then become a
fully effective solution of the Andr\'e-Oort conjecture for
$Y(1)^n$. We define $D_*(P)$ and by analogy with $D(P)$, but with the
maximum taken only over those $P_i$ where $K(P_i)\neq K_*$ (and
similarly for $D_*(V)$).

For a variety $X\subset Y(1)^n$ we denote by $\F_X$ its field of
definition. We denote by $X^\sp$ the set of all maximal special
subvarieties of $X$. We also denote by $X^\spp$ the set of all special
points in $X$ that are not contained in a positive-dimensional special
subvariety of $X$. We denote by $\deg X$ the degree of $X$ with
respect to the projective embedding $Y(1)^n\simeq\A^n\subset\P^n$
(more precisely we take the sum of the degrees of all irreducible
components). If $X$ is a hypersurface given as the zero locus of a
polynomial $F$ over a number field then we denote by $H(F)=H(X)$ the
maximal Weil height of any of its coefficients.

As a general convention throughout the paper we denote by $c(\cdots)$
some effectively computable constant depending on a set of
parameters. If the constant does not depend on any parameters we use
``$\const$'' to avoid confusion. Our main result is as follows.

\begin{Thm}\label{thm:main}
  Let $X\subset Y(1)^n$ be defined over a number field and let
  $V\in X^\sp$. Then
  \begin{align}
    \deg V &\le c(\deg X,[\F_X:\Q]), & D_*(V) &\le c(\deg X,[\F_X:\Q]).
  \end{align}
  For $D(V)$ we have the weaker estimate
  \begin{equation}
    D(V) \le c(\Delta_*,\deg X,[\F_X:\Q]).
  \end{equation}
\end{Thm}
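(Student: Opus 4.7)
The strategy is the Pila--Zannier paradigm applied to the $j$-function uniformization $\pi = j^n \colon \H^n \to Y(1)^n$, in its effective form. A special subvariety $V\subset Y(1)^n$ is determined by its defining data --- the partition $S_0\cup\cdots\cup S_w$, the CM coordinates $(p_i)_{i\in S_0}$, and the moduli $(N_{ij})$. Under the projective embedding one has $\deg V$ comparable polynomially to $\max N_{ij}$, while $D(V)$ (resp.\ $D_*(V)$) is by definition the largest of the $|\Delta(p_i)|$, unrestricted (resp.\ restricted to $K(p_i)\ne K_*$). The plan is therefore to assume that one of $\max N_{ij}$, $D_*(V)$, or $D(V)$ exceeds the claimed effective threshold (with $D(V)$ additionally allowed to depend on $\Delta_*$) and to derive a contradiction.

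The lower bound comes from Galois orbits. For any $\sigma\in\Gal(\bar\Q/\F_X)$ the image $\sigma(V)$ is again a special subvariety contained in $X$, with the same $S_i$ and $N_{ij}$ but with the $p_i$ replaced by their Galois conjugates. By Shimura reciprocity the orbit $\Gal(\bar\Q/\F_X)\cdot V$ has cardinality comparable to $\prod_{i\in S_0} h(\Delta(p_i))$ up to a factor controlled by $[\F_X:\Q]$ and a bounded power of the $N_{ij}$. The Siegel--Tatuzawa theorem then supplies an effective lower bound $h(\Delta)\gg_\e|\Delta|^{1/2-\e}$ for every imaginary quadratic discriminant apart from those of the single exceptional field $K_*$; this accounts for the dichotomy between the fully effective $D_*(V)$ estimate and the $\Delta_*$-dependent $D(V)$ estimate. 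The basic case $S_0=\emptyset$ gets no input from Galois, but there the claim concerns only $\deg V$ and will be handled by the upper-bound step directly.

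The upper bound comes from effective counting in the uniformization. Each conjugate $\sigma(V)$ lifts to a real-semialgebraic ``block'' inside the definable set $Z=\pi^{-1}(X)\cap\cF^n$ in $\R_{\mathrm{an},\exp}$, and distinct conjugates produce distinct blocks carrying rational points whose heights are polynomially bounded in the defining data of $V$. Applying the author's sharply o-minimal (polynomial Pila--Wilkie) counting theorem to $Z$ gives an upper bound, depending only on $\deg X$ and $[\F_X:\Q]$, on the number of such blocks that are not already swept out by a common positive-dimensional algebraic family inside $Z$; by the Ax--Lindemann theorem of Pila for $Y(1)^n$, any such family descends to a positive-dimensional special subvariety $W\subset X$ with $W\supseteq V$. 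Since $V\in X^\sp$ is maximal this forces $W=V$, which in turn forces the $N_{ij}$ (rather than the $|\Delta(p_i)|$) to be bounded; matching the Galois lower bound against the Pila--Wilkie upper bound then yields the three estimates.

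The principal obstacle is to preserve full effectivity at every step: the Pila--Wilkie input must be the polynomial-in-height sharply o-minimal version rather than the classical sub-polynomial one, the class-number lower bound must be Siegel--Tatuzawa rather than Siegel's ineffective estimate, and the translation between the algebraic data defining $\sigma(V)$ and the analytic blocks $B_\sigma$ in $\H^n$ must be quantified uniformly in $\deg X$ and $[\F_X:\Q]$. Reconciling these effective inputs --- in particular ensuring that the constants arising from the counting step do not absorb the gain from $h(\Delta)$ --- is where the real work lies.
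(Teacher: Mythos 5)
Your proposal follows the Pila--Zannier skeleton, which is indeed the right framework, but it misses the two technical mechanisms that carry the theorem past the obvious obstructions, and it handwaves exactly the point that the theorem is designed to address.

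First, the effective Pila--Wilkie counting theorem available to the author (the Noetherian-functions version in \cite{me:noetherian-pw}) applies only on \emph{compact} subsets of the fundamental domain; it gives no uniform control as $\Im\tau\to\infty$. Your sketch applies ``the author's sharply o-minimal counting theorem to $Z=\pi^{-1}(X)\cap\cF^n$'' as though $\cF^n$ were admissible, but $\cF$ is non-compact, and the constant in the counting bound genuinely blows up near the cusp. The paper's remedy is to invoke an effectivized (Tatuzawa-flavoured) form of Duke's equidistribution theorem (Theorem~\ref{thm:duke-effective} and the Appendix): for non-$K_*$ discriminants, a fixed fraction of the Galois orbit lands in a fixed compact $\Omega_R$, so it suffices to count inside $\Omega_R^m$. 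Without this step the competing estimates simply do not close.

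Second, and more seriously, your explanation of the $D_*(V)$ versus $D(V)$ dichotomy --- ``Siegel--Tatuzawa gives $h(\Delta)\gg|\Delta|^{1/2-\e}$ away from $K_*$, and this accounts for the dichotomy'' --- does not survive scrutiny. If some coordinate $P_j$ is associated to $K_*$ with huge discriminant $\Delta_j$, the height of the lifted point is governed by $\Delta_j$, so the Pila--Wilkie upper bound is a power of $\Delta_j$, while Tatuzawa only bounds the orbit from below by a power of the largest non-$K_*$ discriminant $\Delta_i$. These are incomparable, and no bound on $\Delta_i$ follows. The paper's actual mechanism is to \emph{fix} all $K_*$-coordinates at a value $Q\in K_*^\tf$ chosen by an averaging argument, pass to the section $X_Q\subset Y(1)^m$, and run the entire argument relativized over $K_*^\tf$. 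The Galois lower bound on this section is then supplied by Lemma~\ref{lem:galois-size}, which rests on Cohn's theorem that $\Gal(K^\tf\cap K\cdot K_*^\tf/K)$ has exponent $2$, together with a bound on $\#\Pic(\cO)[2]$. This relativization is the central new idea and it is entirely absent from your sketch.

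Finally, on the $\deg V$ bound and the reduction from $X^\sp$ to $X^\spp$: you propose to get this from block-counting plus Ax--Lindemann, i.e.\ from o-minimal finiteness. But o-minimal finiteness is precisely the third ineffective ingredient in Pila's proof that the paper sets out to remove. The paper instead proves an effective degree bound for the Zariski closure $X^\qsp$ of the union of positive-dimensional quasi-special subvarieties (Lemma~\ref{lem:qsp-bound}) by encoding the $j$-function as a trajectory of an explicit rational vector field and invoking multiplicity estimates (Lemma~\ref{lem:red-degree}), then deducing Theorem~\ref{thm:Xsp-dim+} by induction. This differential-algebraic route is genuinely different from anything in your proposal, and without it the reduction to $X^\spp$ --- and hence the bound on $\deg V$ --- remains ineffective.

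In short: the skeleton is right, the flesh is missing. The three ingredients you would need to make this a proof are (a) Duke/Tatuzawa equidistribution to tame the cusp, (b) the $K_*^\tf$-relativization via Cohn's theorem to separate $D_*$ from $D$, and (c) the vector-field/multiplicity-estimate machinery to effectivize the $X^\sp\to X^\spp$ reduction.
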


To state some consequences of Theorem~\ref{thm:main} we introduce the
following terminology.  A special variety $V\subset Y(1)^n$ is called
a \emph{$*$-variety} if all associated quadratic fields of $P(V)$
equal $K_*$. A variety $X'$ is called a \emph{special section} of a
variety $X$ if
\begin{equation}
  X' = \pi(X\cap\{x_i=P_i\text{ for all } i\in\Sigma\}), \qquad \pi(x_1,\ldots,x_n)=(x_i)_{i\not\in\Sigma}
\end{equation}
for some $\Sigma\subset\{1,\ldots,n\}$ and some special points
$P_i\in Y(1)$ for $i\in\Sigma$.

Theorem~\ref{thm:main} reduces the problem of effectively computing
$X^\sp$ to the problem of computing the $*$-varieties in
$X_\alpha^\sp$ where $X_\alpha$ ranges over an effectively constructed
set of special sections of $X$. Indeed, for each set
$\Sigma\subset\{1,\ldots,n\}$ and each choice of $P_i$ with
discriminant at most\footnote{We remark that there exist only a finite
  number of special points in $Y(1)$ with a given discriminant, and it
  is straightforward to effectively enumerate them.}
$c(\deg X,[\F_X:\Q])$ define $X_\alpha$ to be the corresponding
special section. Then every $V\in X^\sp$ corresponds to a $*$-variety
in some $X_\alpha^\sp$.

The following corollary is a generalization of
\cite[Theorem~3,~Corollary~2]{kuhne:ao2} from $n=2$ to the general
case. It provides a uniform bound on the discriminants of special
points where at most one coordinate corresponds to $K_*$.

\begin{Cor}\label{cor:no-modular}
  Let $X\subset Y(1)^n$ be defined over a number field and let
  $V\in X^\sp$. Suppose that at most one of the associated quadratic
  fields of $P(V)$ equal $K_*$. Then
  \begin{equation}
    D(V) \le c(\deg X,[\F_X:\Q]).
  \end{equation}  
\end{Cor}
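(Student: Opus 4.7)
My plan is to use Theorem~\ref{thm:main} to reduce to bounding the single coordinate $p_{i^*}$ with $K(p_{i^*})=K_*$, then bound the Galois orbit of $V$ over $\F_X$ by a B\'ezout argument applied to a ``parametric enlargement'' $\tilde V$ of $V$, and finally convert this Galois-orbit bound into a discriminant bound via the class-number formula and the effective resolution of the Gauss class-number problem. If no coordinate of $P(V)$ has $K=K_*$ then $D(V)=D_*(V)\le c(\deg X,[\F_X:\Q])$ is immediate from Theorem~\ref{thm:main}. Otherwise let $p_{i^*}$ be the unique coordinate of $P(V)$ with $K(p_{i^*})=K_*$; Theorem~\ref{thm:main} still bounds $\deg V$ and $|\Delta(p_i)|$ for every other $i\in S_0\setminus\{i^*\}$ effectively, so only $|\Delta(p_{i^*})|$ remains.

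For the geometric step, let $\tilde V\supset V$ be the special subvariety obtained from the defining data of $V$ by promoting $i^*$ out of $S_0$ into its own singleton block in~\eqref{eq:special-variety}, thereby deleting the constraint $y_{i^*}=p_{i^*}$. Since no modular condition in~\eqref{eq:special-variety} involved $y_{i^*}$, the resulting variety factors as a product with $Y(1)$ in the $i^*$-coordinate, so $\tilde V$ is irreducible with $\dim\tilde V=\dim V+1$ and $\deg\tilde V$ effectively bounded by $\deg V$. Since $V$ is maximal in $X^\sp$ while $V\subsetneq\tilde V$ is still a special subvariety, $\tilde V\not\subset X$; irreducibility of $\tilde V$ then forces $\dim(\tilde V\cap X)\le\dim V$, and a standard B\'ezout bound limits the number of $\dim V$-dimensional irreducible components of $\tilde V\cap X$ to $\deg\tilde V\cdot\deg X\le c(\deg X,[\F_X:\Q])$.

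For the Galois step, each conjugate $V^\sigma$ with $\sigma\in\Gal(\bar\Q/\F_X)$ lies in $X^\sp$, shares the combinatorial type of $V$, and arises as a top-dimensional irreducible component of $\tilde V^\sigma\cap X$, where $\tilde V^\sigma$ is built from the $\sigma$-conjugates of the $p_i$ with $i\in S_0\setminus\{i^*\}$. Those $p_i$ have effectively bounded discriminants and hence effectively bounded Galois orbits, so only effectively many $\tilde V^\sigma$ occur; once $\tilde V^\sigma$ is fixed, distinct values of $\sigma(p_{i^*})$ give distinct top-dimensional components of $\tilde V^\sigma\cap X$ and so contribute at most $\deg\tilde V\cdot\deg X$ conjugates. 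Combining these, the full Galois orbit of $V$ has cardinality $M=c(\deg X,[\F_X:\Q])$. Projecting onto the $i^*$-coordinate yields $M\ge[\F_X(p_{i^*}):\F_X]\ge h(\O_{p_{i^*}})/[\F_X:\Q]$, hence an effective bound $h(\O_{p_{i^*}})\le c(\deg X,[\F_X:\Q])$.

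To finish, write $|\Delta(p_{i^*})|=f_{i^*}^2|\Delta_*|$ and invoke the standard class-number formula for orders, which expresses $h(\O_{p_{i^*}})$ as a product of $f_{i^*}$, $h(K_*)$, a bounded unit-index factor, and an Euler product over primes dividing $f_{i^*}$ that is $\gg\phi(f_{i^*})/f_{i^*}$. Using $h(K_*)\ge 1$ this yields an effective upper bound on $f_{i^*}$, while using $\phi(f_{i^*})\ge 1$ it yields $h(K_*)\ll h(\O_{p_{i^*}})$; the latter, combined with the effective resolution of the Gauss class-number problem (Baker--Heegner--Stark, refined by Goldfeld--Gross--Zagier and Oesterl\'e), produces an effective bound on $|\Delta_*|$, and hence on $|\Delta(p_{i^*})|$. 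I expect the main difficulty to lie in the geometric step: one must verify carefully that $\tilde V$ is genuinely an irreducible special subvariety of dimension $\dim V+1$, that distinct Galois conjugates $\sigma(p_{i^*})$ really do produce distinct top-dimensional components of $\tilde V^\sigma\cap X$ rather than collapsing into lower-dimensional or coincident loci, and that the effective constants from Theorem~\ref{thm:main} propagate cleanly through the B\'ezout bounds without any hidden dependence on $H(X)$ or $\Delta_*$.
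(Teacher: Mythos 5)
Your proposal is correct, and it takes a genuinely different route from the paper's proof in the geometric step. The paper first reduces via Theorem~\ref{thm:Xsp-dim+} to the case $V=\{P\}$ with $P\in X^\spp$, then reuses the collection of \emph{special sections} $\{X_\alpha\}$ introduced after Theorem~\ref{thm:main}: under the hypothesis of the corollary the relevant sections $X_\alpha$ have ambient dimension at most $1$, and a short case analysis (ambient dimension $0$, or ambient dimension $1$ with $X_\alpha$ zero-dimensional or $X_\alpha=Y(1)$) bounds the number of points by $\deg X$ per section. Your argument instead enlarges $V$ directly to the special subvariety $\tilde V$ obtained by freeing the $i^*$-coordinate, invokes maximality of $V$ to force $\tilde V\not\subset X$, and then applies B\'ezout to $\tilde V\cap X$ to bound the Galois orbit of $V$. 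Both routes then finish identically with the class number formula for orders and the effective Goldfeld--Gross--Zagier lower bound (you spell out the reduction from order class numbers to the fundamental discriminant, which the paper leaves implicit). What your route buys is that it handles positive-dimensional $V$ directly without reducing to the point case, and it is independent of the particular enumeration of special sections; what the paper's route buys is brevity, since the $X_\alpha$ machinery is already set up and the corollary falls out in a few lines. One small caution you raise yourself and correctly dispose of: irreducibility of $\tilde V$ and the fact that distinct conjugates $\sigma(p_{i^*})$ give distinct top-dimensional components of $\tilde V^\sigma\cap X$ are exactly what make the B\'ezout count valid, and the product structure $\tilde V\simeq V'\times Y(1)$ does ensure irreducibility and $\deg\tilde V\le\deg V$.
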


\subsection{Results for degree non-degenerate varieties}

In some cases Theorem~\ref{thm:main} can be combined with additional
arguments to produce a fully effective Andr\'e-Oort type statement. 
To state such a result we introduce the following terminology.

\begin{Def}[Degree non-degenerate polynomials]
  A polynomial $F\in\C[x_1,\ldots,x_n]$ is called \emph{degree
    non-degenerate} (or dnd) if for every $i=1,\ldots,n$ we have
  either $\deg_{x_i} F=\deg F$ or $\deg_{x_i}F=0$. The zero locus of
  such $F$ is called a dnd hypersurface.

  We say that $F$ is \emph{hereditarily dnd} (or hdnd) if it is dnd,
  and if for each $i\neq j$ the restriction $F\rest{x_i=x_j}$, viewed
  as a polynomial in $n-1$ variables, is hdnd. The zero locus of such
  $F$ is called an hdnd hypersurface.
\end{Def}

Our key technical result concerning dnd hypersurfaces is as follows.

\begin{Cor}\label{cor:dnd}
  Let $X\subset Y(1)^n$ be a dnd hypersurface defined over a number
  field and let $P\in X^\spp$. Then either $P_i=P_j$ for some
  $i\neq j$, or
  \begin{equation}\label{eq:dnd}
    D(P) \le c(\deg X,H(X),[\F_X:\Q]).
  \end{equation}
\end{Cor}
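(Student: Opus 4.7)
My plan is to proceed by induction on $n$, using Theorem~\ref{thm:main} to dispose of coordinates with $K(P_i)\ne K_*$ and then leveraging the dnd structure to specialize one such coordinate and reduce to $n-1$ variables. The base case $n=1$ is essentially a height calculation: $P_1$ is a root of a univariate polynomial of degree $\deg X$ and height $H(X)$, so the Weil height $h(P_1)$ is bounded effectively in the parameters, and then the effective inverse inequality $|\Delta(P_1)|\le\exp(O(h(P_1)))$ for CM $j$-invariants (via the growth of the coefficients of the Hilbert class polynomial) gives the required bound on $|\Delta(P_1)|$.

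For the inductive step, I would first apply Theorem~\ref{thm:main} to the maximal special subvariety $\{P\}\in X^\sp$, obtaining $D_*(P)\le c(\deg X,[\F_X:\Q])$; this gives effective bounds on $|\Delta(P_j)|$, and hence on $h(P_j)$, for every index $j$ with $K(P_j)\ne K_*$. Set $T:=\{i\,:\,K(P_i)=K_*\}$. If $T\subsetneq\{1,\ldots,n\}$, I pick any $j_0\notin T$ and specialize $F$ by $x_{j_0}=P_{j_0}$ to obtain a polynomial $G$ in $n-1$ variables. The key observation is that the top-degree homogeneous part of a dnd polynomial of degree $d$ is $\sum_i c_i x_i^d$ with $c_i\ne 0$ for every $x_i$ actually appearing in $F$; hence specializing one variable removes only the single term $c_{j_0}x_{j_0}^d$ from the leading form, and $G$ remains dnd of the same degree $d$ (it is not identically zero, since otherwise $X$ would contain the positive-dimensional special subvariety $\{x_{j_0}=P_{j_0}\}$, contradicting $P\in X^\spp$). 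The truncated point $P':=(P_i)_{i\ne j_0}$ is a special point of $\{G=0\}$ with pairwise distinct coordinates, and it lies in $\{G=0\}^\spp$ (any positive-dimensional special subvariety through $P'$ would lift via the special section $x_{j_0}=P_{j_0}$ to one through $P$ in $X$). Since $H(G)\le H(F)+d\cdot h(P_{j_0})$ is bounded in the parameters, the inductive hypothesis yields $D(P')\le c'$, and combining with the bound on $|\Delta(P_{j_0})|$ completes the step.

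The main obstacle is the residual case $T=\{1,\ldots,n\}$, in which every coordinate has $K(P_i)=K_*$ and Theorem~\ref{thm:main} provides no initial bound to drive the induction. Here I would exploit the pairwise modular relations $\Phi_{N_{1j}}(P_1,P_j)=0$ for $j=2,\ldots,n$, which are available since all $P_i$ are CM by orders in the single field $K_*$ (with $N_{1j}>1$ since we may assume $P_i\ne P_j$). A direct comparison of degree profiles shows that $\Phi_N$ is not a dnd polynomial for $N>1$: its degree in each individual variable strictly differs from its total degree. Consequently $\Phi_{N_{1j}}\nmid F$, and the system $\{F=0,\Phi_{N_{12}}=0,\ldots,\Phi_{N_{1n}}=0\}$ cuts out a zero-dimensional scheme containing the Galois orbit of $P$ over $\F_X$. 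Balancing a Bezout upper bound on the size of this scheme against a class-number lower bound for the Galois orbit (using that the ring-class number of an order of conductor $f$ in $K_*$ grows at least linearly in $f$) should yield an effective bound on the conductors and hence on $D(P)$. The essential technical challenge lies in controlling the dependence on the unknown discriminant $|\Delta_{K_*}|$ in this comparison; this is where the dependence on $H(X)$ must enter the final bound in an effective way.
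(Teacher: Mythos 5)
Your induction reduces cleanly until the residual case $T=\{1,\dots,n\}$, but that case is exactly the heart of the matter and your Bezout proposal does not close it. The isogeny degrees $N_{1j}$ are not controlled by $\deg X$, $H(X)$, $[\F_X:\Q]$: two CM points with orders in $K_*$ of conductors $\le f$ are linked by an isogeny whose minimal degree can be as large as the Minkowski bound, i.e.\ on the order of $\sqrt{|\Delta_1|}$. Then the Bezout bound for $\{F=0,\Phi_{N_{12}}=0,\dots,\Phi_{N_{1n}}=0\}$ is at least $\deg F\cdot\prod_j\psi(N_{1j})$, which can be comparable to $|\Delta_1|^{(n-1)/2}$, while the Galois-orbit lower bound you set against it is only $\gg|\Delta_1|^{1/2-\e}$. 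The two sides of the comparison move in the wrong direction as $|\Delta_1|\to\infty$, so no contradiction results. Moreover, your argument in this case uses dnd-ness only to argue $\Phi_{N_{1j}}\nmid F$, which is too weak; if such a degree/class-number count sufficed it would apply essentially to any hypersurface, and yet the conclusion of the corollary is false without the dnd hypothesis.

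The paper closes the residual case by a completely different mechanism: cusp asymptotics together with the structure of a dnd leading form. After a Galois conjugation, take the coordinate $P_1$ of maximal discriminant $\Delta_1=f_1^2\Delta_*$ to be $j(\tau_{\Delta_1})$ with $\tau_{\Delta_1}$ the highest representative in $\cF$. Using $\bigl||j(\tau)|-e^{2\pi\Im\tau}\bigr|<2079$, one gets $|P_1|\gg e^{\pi f_1|\Delta_*|^{1/2}}$ and, because either $\Delta_i<\Delta_1$ or $P_i\ne P_1$ forces $\Im\tau_i\le\tfrac12\Im\tau_{\Delta_1}$, also $|P_1|/|P_i|\gg e^{\pi|\Delta_*|^{1/2}}$ for $i\ge2$ (where for $K_i\ne K_*$ you first bound $P_i$ via Theorem~\ref{thm:main}). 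Since $F$ is dnd its coefficient on $x_1^{\deg F}$ is nonzero, so the single monomial $P_1^{\deg F}$ dominates every other term of $F(P)$ once $|\Delta_*|$ exceeds an effective threshold in $\deg X$, $H(X)$, $[\F_X:\Q]$; thus $F(P)=0$ is impossible, and for $|\Delta_*|$ below that threshold Theorem~\ref{thm:main} is already fully effective including $K_*$. You should replace the residual-case Bezout sketch with this cusp-dominance argument; the rest of your reductions (specialize a non-$K_*$ coordinate, note that dnd and $X^\spp$-membership are preserved) are compatible with it, though in the paper's organization they are not even needed.
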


In the case that $X$ is hdnd we derive from Corollary~\ref{cor:dnd}
the following effective Andr\'e-Oort statement. We call a special
variety \emph{linear} if the modular relations $\Phi_N(x_i,x_j)$
involved in its definition are all of the form $x_i\equiv x_j$.

\begin{Cor}\label{cor:hdnd}
  Let $X\subset Y(1)^n$ be an hdnd hypersurface defined over a number field
  and let $V\in X^\sp$. Then $V$ is linear and
  \begin{equation}
    D(V) \le c(\deg X,H(X),[\F_X:\Q]).
  \end{equation}
\end{Cor}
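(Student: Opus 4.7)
The plan is by induction on $n$. The base case $n=1$ is immediate from Corollary~\ref{cor:dnd}: every univariate polynomial is trivially dnd (hence hdnd), the ``$P_i=P_j$'' alternative is vacuous, and linearity is vacuous. For the inductive step, write $X=\{F=0\}$ with $F$ of degree $d$ and set $J:=\{i:\deg_{y_i}F=d\}$, so that $F$ being dnd means every monomial of $F$ is supported on $J$ and each $y_i^d$ with $i\in J$ occurs in $F$ with nonzero coefficient. Let $V\in X^\sp$ be defined by a partition $S_0\cup S_1\cup\cdots\cup S_w$ and relations $\Phi_{N_{ij}}(y_{s_k},y_j)=0$. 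If $V=\{P\}$ is $0$-dimensional then $P\in X^\spp$; if all coordinates of $P$ are distinct, Corollary~\ref{cor:dnd} gives the bound, and otherwise a coincidence $P_i=P_j$ lets us pass to the hdnd restriction $X\cap\{x_i=x_j\}\subset Y(1)^{n-1}$ and invoke the inductive hypothesis.

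Assume $w\ge 1$. The heart of the argument is to show that $V$ is linear. Suppose for contradiction that some $\Phi_N(y_a,y_b)=0$ from the definition of $V$ has $N\ge 2$; after relabeling take $a=s_1$ and $b\in S_1\setminus\{s_1\}$. A preliminary observation is that maximality of $V$ forces $b\in J$: otherwise $y_b$ does not appear in $F$, and dropping the relation $\Phi_N(y_a,y_b)=0$ from the definition of $V$ would produce a strictly larger special subvariety of $X$. Parametrize the cuspidal asymptotics of $V$ by letting $t_1:=y_a\to\infty$ along the unique ascending branch $y_b\sim y_a^N$ of $\Phi_N$, with the remaining free parameters $t_2,\ldots,t_w$ bounded. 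Each $y_i$ then has a definite growth rate $r_i$ in $t_1$: $r_i=0$ outside $S_1$, $r_{s_1}=1$, and $r_j=N_{1j}$ for $j\in S_1\setminus\{s_1\}$ on its ascending branch. The maximal growth rate of any monomial of $F$ on $V$ equals $dr^*$ with $r^*:=\max_{i\in J}r_i\ge r_b=N>1$. If this maximum is uniquely attained at some $i^*\in J$, the leading term of $F|_V$ in $t_1$ is a nonzero multiple of $t_1^{dr^*}$ (namely the coefficient of $y_{i^*}^d$ in $F$, nonzero by dnd), contradicting $F|_V\equiv 0$. Otherwise several indices $i_1^*,\ldots,i_\ell^*$ attain the maximum: they must all lie in $S_1\setminus\{s_1\}$ with the common value $N_{1,i_k^*}=r^*$ and all on the unique ascending branch $y\mapsto j(r^*z)$ of $\Phi_{r^*}$, forcing $y_{i_1^*}=\cdots=y_{i_\ell^*}$ identically on $V$. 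The hdnd restriction $X':=X\cap\{y_{i_1^*}=y_{i_2^*}\}\subset Y(1)^{n-1}$ then contains the image of $V$, which is maximal special in $X'$ (any strictly larger special subvariety of $X'$ would lift to one strictly larger than $V$ in $X$); the inductive hypothesis forces this image to be linear, contradicting the inherited non-trivial relation $\Phi_N$.

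With $V$ linear, $V=\{(p_i)_{i\in S_0}\}\times\{y_i=y_{s_k}:i\in S_k,\ k\ge 1\}$. If some $|S_k|>1$ for $k\ge 1$, successive hdnd identifications $y_i=y_{s_k}$ reduce $X$ to an hdnd hypersurface in $Y(1)^{n^{\mathrm{red}}}$ with $n^{\mathrm{red}}<n$, and the inductive hypothesis applied to the image of $V$ gives the bound on $D(V)$. Otherwise $|S_k|=1$ for every $k\ge 1$; reading $F|_V\equiv 0$ as a polynomial identity in the free variables $y_{s_k}$ and extracting the coefficient of the pure power $y_{s_k}^d$, which is exactly the coefficient of $y_{s_k}^d$ in $F$ itself (no $p_i$'s appear), forces this coefficient to vanish for each $k\ge 1$, so dnd yields $s_k\notin J$. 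Hence $F$ does not involve any $y_{s_k}$ and equals a polynomial $F'((y_i)_{i\in S_0})$ which is hdnd in $|S_0|<n$ variables; maximality of $V$ in $X$ implies $(p_i)\in\{F'=0\}^\spp$, and the inductive hypothesis applied to $F'$ yields $D(V)=D((p_i))\le c(\deg F',H(F'),[\F_{F'}:\Q])\le c(\deg X,H(X),[\F_X:\Q])$. The main obstacle is the linearity step, where one must identify the correct cuspidal direction of $V$ along which a single top-degree monomial $y_{i^*}^d$ dominates and handle possible ties by descending to an hdnd restriction in $Y(1)^{n-1}$.
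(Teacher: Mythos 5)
Your proof of the $D(V)$ estimate once linearity is known is correct and close in spirit to the paper's (the paper instead specializes the free block $V'$ at the fixed special point $Q=(j(i),\ldots,j(i))$ and uses hdnd stability under sections, but both routes work). However, your route to linearity is genuinely different from the paper's and contains a gap. The paper deduces linearity arithmetically: a non-linear special curve $V$ with no constant coordinates and no two coincident coordinates contains, for each quadratic field $K$ outside a finite exceptional set, a CM point with all coordinates in $K$ and pairwise distinct, of arbitrarily large discriminant; it then re-runs the cuspidal comparison from the proof of Corollary~\ref{cor:dnd} against such points. You instead work with the cuspidal asymptotics of $V$ itself, which is a nice idea, but the key assertion in that step is false.

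Concretely, you assert that at the cusp where $(y_a,y_b)$ is placed on the ascending branch $y_b\sim y_a^N$ of $\Phi_N$, every other $y_j$ with $j\in S_1\setminus\{s_1\}$ is also on its ascending branch, so that $r_j=N_{1j}$ for all such $j$. This cannot in general be arranged: one cannot simultaneously put all coordinates on ascending branches at a single cusp of $V$. What is always true is the inequality $r_j\le N_{1j}$, with equality exactly on the ascending branch. To see the problem, take $V\subset Y(1)^3$ parametrized by $\tau\mapsto\bigl(j(\tau),j(2\tau),j(2\tau+\tfrac12)\bigr)$, so $N_{12}=2$, $N_{13}=8$. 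If you choose $b=2$ (a legitimate choice with $N\ge2$), the cusp making $(y_1,y_2)$ ascending is $\tau\to i\infty$, where the rates are $(r_1,r_2,r_3)=(1,2,2)$; the maximum $r^*=2$ is tied at indices $\{2,3\}$, yet $y_2\ne y_3$ on $V$, and $N_{13}=8\ne r^*$, so the index $3$ is not on an ascending branch. Your tie-breaking claim that tied indices share $N_{1,i_k^*}=r^*$ and are therefore identified thus fails. The argument can be repaired by choosing $b$ to maximize $N_{1b}$ over $b\in S_1\setminus\{s_1\}$ (such $b$ still lies in $J$ by your maximality observation): at the cusp where $(y_a,y_b)$ is ascending one then has $r_b=N_{1b}\ge N_{1j}\ge r_j$ for all $j$, so any tie $r_j=r^*$ forces $r_j=N_{1j}=N_{1b}$, i.e.\ $j$ is on its ascending branch, and only then does $y_j=y_b$ follow. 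As written, however, the linearity step is incorrect.
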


Corollary~\ref{cor:hdnd} provides a wealth of examples in arbitrary
dimension and degree where $X^\sp$ can be effectively computed ---
indeed an open dense set of examples in every dimension and degree.
In particular Corollary~\ref{cor:hdnd} applies when $X$ is a linear
hypersurface. More generally Corollary~\ref{cor:dnd} gives another
proof of the key technical ingredient \cite[Lemma~3]{bk:linear} in the
recent effective solution of Andr\'e-Oort for arbitrary linear
subvarieties due to Bilu-K\"uhne. Our proof of Corollary~\ref{cor:dnd}
is based on an idea inspired by \cite{bk:linear} involving asymptotics
around the cusp, but modulo Theorem~\ref{thm:main} the argument
becomes quite simple.

\begin{Rem}
  It is intructive to consider an example where the conclusion of
  Corollary~\ref{cor:hdnd} fails. Perhaps the simplest such example is
  the modular polynomial $\Phi_2$ given by
  \begin{multline}
    \Phi_2(x,y):=x^3+y^3-x^2y^2+1488xy(x+y)-162\cdot10^3(x^2+y^2)+40773375xy\\+
    8748\cdot10^6(x+y)-157464\cdot10^9.
  \end{multline}
  As guaranteed by Corollary~\ref{cor:hdnd}, the 2-modular curve is not
  hdnd since we have the inequality $\deg_x\Phi_2=3<4=\deg\Phi$.
\end{Rem}

\subsection{Overview of the proof}

We start by reviewing the proof of Theorem~\ref{thm:main}. The proof
follows the general approach of Pila \cite{pila:andre-oort}. We assume
for the purposes of this overview that the reader is familiar with
this approach, as there are already several good surveys available
(e.g. \cite{scanlon:survey}) in addition to the original paper. There
are three main sources of ineffectivity in the proof of
\cite{pila:andre-oort}, as follows:
\begin{enumerate}
\item The Siegel bound that is used to produce lower bounds
  $h(d)\gg |d|^{1/2-\e}$ for the class number $h(d)$ is ineffective.
\item The Pila-Wilkie bound for sets definable in
  $\R_{\mathrm{an},\exp}$, which is used to provide a competing upper
  bound for Galois orbits, is ineffective.
\item The process by which one reduces the problem of controlling the
  maximal special varieties $X^\sp$ to the problem of controlling the
  maximal special points $X^\spp$ employs o-minimal finiteness
  properties and is ineffective.
\end{enumerate}
In~\secref{sec:proof-Xspp} we review the effectiviation of the upper
bound for special points in $X^\spp$, corresponding roughly to items
1--2 above. In~\secref{sec:proof-Xsp} we review the main idea used to
effectively reduce the computation of $X^\sp$ to that of
$X^\spp$. Finally in~\secref{sec:proof-hdnd} we review the proof of
the fully effective result for hdnd hypersurfaces.

\subsubsection{Effectivizing the upper bound for $P\in X^\spp$}
\label{sec:proof-Xspp}

To deal with the ineffectivity of the lower bound we appeal to a
result of Tatuzawa \cite{tatuzawa} stating that the constant in
Siegel's bound can be made effective for all discriminants except
those corresponding to orders in a single imaginary quadratic field
$K_*$. The possibility of using this result was already mentioned in
\cite[Section~13.3]{pila:andre-oort}, where it was noted that if one
could effectivize the requisite Pila-Wilkie statement this would lead
to a bound for the number of points in $X^\spp$ whose coordinates all
correspond to fields other than $K_*$. Note however that we produce
bounds for the discriminants of non-exceptional coordinates even when
some coordinates do correspond to $K_*$. This extra generality does
not follow straightforwardly since the height of such points is
determined by the coordinates of largest discriminant, which may be
associated to $K_*$.

The main result of \cite{me:noetherian-pw} can be used to effectivize
the Pila-Wilkie bounds needed in Pila's proof with a significant
caveat: the results apply to any compact subdomain of the fundamental
domain, but cannot be used to obtain effective bounds uniformly over
the entire (non-compact) fundamental domain. To overcome this
difficulty we appeal to Duke's equidistribution theorem
\cite[Theorem~1.i]{duke:equidist}. This implies that the Galois orbit
of each coordinate $P_i$ is equidistributed in $Y(1)$, and in
particular that a large portion of it is contained in a fixed compact
subset. Duke's result is itself ineffective because of the same use of
Siegel's ineffective bound. It can however be made effective using
Tatuzawa's theorem, with the same exception for discriminants of
$K_*$. A sketch of the proof of this variant of Duke's Theorem is
provided for completeness in Appendix~\ref{appendix} by E. Kowalski.

Let $K_*^\tf$ denote the field generated by all ring class fields
associated to $K_*$. Our idea for dealing with the case that some
coordinates of $P$ correspond to $K_*$ is to repeat Pila's argument
\emph{relativized} over $K_*^\tf$. More specifically, we use a result
of Cohn \cite[Proposition~8.3.12]{cohn:class-fields} showing that
$K_*^\tf$ is almost disjoint from the ring class fields of non-$K_*$
discriminants. We show that this implies that the number of conjugates
of $P$ under $\Gal(\bar\Q/K_*^\tf)$ remains large, and that a large
part of this orbit remains in a compact subset as predicted by the
equidistribution theorem. One can therefore \emph{fix} the values of
all $K_*$-coordinates and apply Pila's strategy to the resulting
section, obtaining an upper bound for all discriminants of non-$K_*$
coordinates. A crucial point here is that the effective Pila-Wilkie
theorem of \cite{me:noetherian-pw} gives constants that are
independent of the chosen section.

\subsubsection{Effectivizing the upper bound for $X^\sp$}
\label{sec:proof-Xsp}

As in Pila's appraoch, our idea is to inductively reduce the study of
$X^\sp$ to the study of $X^\spp$ by analyzing the possible types of
the positive-dimensional maximal special subvarieties of $X$. The new
ingredient in our approach is a transformation of this question, via a
differential algebraic construction, to a completely algebraic
question. The reduction is based on the following observation: while
the special subvarieties are defined by algebraic conditions
$\Phi_N(x_i,x_j)=0$ of (a-priori) unbounded degrees, their preimages
under the universal covering map $\pi:\H^n\to Y(1)^n$ satisfy
essentially linear relations $\tau_i=g\cdot\tau_j$ where
$g\in\GL_2^+(\Q)$. After describing the graph of $\pi$ using a
rational vector field encoding the differential equation satisfied by
the $j$-function, the problem reduces to describing the points where
the trajectory of a vector field satisfies such a linear condition
identically.

The problem above is almost amenable to methods of differential
equations. In particular it may be studied using \emph{multiplicity
  estimates} for the maximal order of vanishing of a polynomial on the
trajectory of a vector field. More precisely, the problem can be
reduced to a purely algebraic problem if we relax the condition
$g\in\GL_2^+(\Q)$ to $g\in\GL_2(\C)$. Luckily the functional
transcendence results of \cite{pila:andre-oort} can be used to show
that the existence of any such algebraic dependence actually implies
the existence of a dependence with rational coefficients.

We remark that the methods applied here yield explicit and fairly
sharp bounds, and can be used to treat similar problems in far greater
generality, including similar questions in the context of abelian
varieties (treated by other methods in \cite[Theorem~1.a]{bz:heights})
and Shimura varieties more general than $Y(1)$. In a joint work in
progress with Christopher Daw we apply these ideas to the study of
optimal varieties in the sense of the Zilber-Pink conjecture for these
various contexts.

\subsubsection{The effective result for hdnd hypersurfaces}
\label{sec:proof-hdnd}

In light of Theorem~\ref{thm:main}, one can essentially reduce to the
case of finding an upper bound for the $*$-points on an hdnd
hypersurface. In other words we may assume that all coordinates are
associated to the fundamental discriminant $\Delta_*$, say
$\Delta_i=f_i^2\Delta_*$. Assume without loss of generality that
$|\Delta_1|$ is maximal among these.

We use a simple idea borrowed from \cite{kuhne:ao2,bk:linear}:
conjugating $P_1$ to the point closest to the cusp within its orbit,
we see from the asymptotic expansion of the $j$-function around the
cusp that $P_1\simeq e^{\pi f_1\sqrt{|\Delta_*|}}$. Any point $P_i$
with $f_i<f_1$ is similarly majorated by
$e^{\pi f_i\sqrt{|\Delta_*|}}$, while any point $P_i\neq P_1$ with the
same discriminant ($f_i=f_1$) turns out to be majorated by
$e^{(1/2)\pi f_1\sqrt{|\Delta_*|}}$. Thus, if no point $P_i$ equals
$P_1$ we see that $P_1^n$ becomes asymptotically dominant over all
other terms in our hdnd equation assuming that $\Delta_*$ is
sufficiently large. If we assume on the contrary that $\Delta_*$ is
bounded by some (effective) constant then Theorem 1 already becomes
fully effective. This finishes the proof of
Corollary~\ref{cor:dnd}. Corollary~\ref{cor:hdnd} is then proved using
an induction over the dimension by intersecting with all possible
diagonals $x_i=x_j$.

\subsection{Acknowledgements}

I would like to express my gratitude to Jonathan Pila for discussions
regarding effectivity issues surrounding Andr\'e-Oort, and in
particular for pointing me in the direction of Tatuzawa's result; to
Elon Lindenstrauss for suggesting Duke's equidistribution result as a
potential remedy for the compactness condition in my paper
\cite{me:noetherian-pw}; and to Gabriel Dill and the anonymous referee
for some corrections and suggestions on the initial version of the
manuscript. I also thank the Tokyo Institute of Technology for their
hospitality during a visit in which some of this work was carried out.

\section{Auxiliary results}

\subsection{Special points on $Y(1)$ and their discriminants}
\label{sec:special-points}

Recall that we identify $Y(1)\simeq \A_\Q^1$ be means of the
$j$-invariant. A point $p\in Y(1)$ is special if and only if
$p=j(\tau)$ for some quadratic number $\tau\in\H$. In this case we
denote by $K(p):=\Q(\tau)$ the quadratic field generated by $\tau$, by
$\cO_{\Q(\tau)}$ the maximal order of $\Q(\tau)$, and by $\cO_\tau$
the order of $\Q(\tau)$ given by the endomorphism ring of
$\Z[\tau]$. The \emph{discriminant} of $\tau$ is defined to be
\begin{equation}
  \Delta(\tau) := \disc \cO_\tau = f_\tau^2 \disc \cO_{\Q(\tau)}
\end{equation}
where $f_\tau$ is the conductor of $\cO_\tau$ with respect to
$\cO_{\Q(\tau)}$. We have $\cO_\tau=\Z+f\cO_{\Q(\tau)}$.

Fix an imaginary quadratic field $K$. For each $f\in\N$ denote by
$\cO_{K,f}:=\Z+f\cO_K$ the order of conductor $f$ and by $K[f]/K$ the
\emph{ring class field} associated to $\cO_{K,f}$. Then $K[f]/\Q$ is a
Galois extension, and $\Gal(K[f]/K)\simeq\Pic(\cO_{K,f})$.

Denote $d:=\disc\cO_{K,f}=f^2\disc\cO_K$ and write $h(d)$ for the
\emph{class number} $h(d):=\#\Pic(\cO_{K,f})$. If $\tau\in\H$ with
$\Delta(\tau)=d$ then $K[f]=K(j(\tau))$. In particular this is
independent of the choice of $\tau$. The number of different $\tau$
satisfying $\Delta(\tau)=d$, up to $\SL_2(\Z)$-equivalence, is
$h(d)$. Moreover $\{j(\tau):\Delta(\tau)=d\}$ forms a complete set of
Galois conjugates in $K[f]/K$.

\subsection{The Siegel-Tatuzawa theorem}
\label{sec:siegel-tatuzawa}

Let $\chi_K$ denote the Dirichlet character associated to an imaginary
quadratic field $K$ and $L(s,\chi_K)$ the associated Dirichlet
L-function. The Siegel-Tatuzawa \cite{tatuzawa} theorem implies that
for any $\e>0$ we have
\begin{equation}\label{eq:siegel-tatuzawa}
  L(1,\chi_K) \ge c(\e) \disc(\cO_K)^{-\e}
\end{equation}
with the possible exception of a single imaginary quadratic field
$K_*(\e)$.

Note that, consistent with our general convention, the constant
$c(\e)$ in~\eqref{eq:siegel-tatuzawa} is effective. Without this extra
stipulation of effectivity the same statement holds for every
imaginary quadratic field $K$ by a classical result of Siegel.

Using the Dirichlet's class number formula,~\eqref{eq:siegel-tatuzawa}
implies that for any $K\neq K_*(\e)$ we have
\begin{equation}
  \#\Pic(\cO_K) \ge c(\e) \disc(\cO_K)^{1/2-\e} \qquad\text{for any }\e>0.
\end{equation}
This can be extended to an arbitrary imaginary quadratic order $\cO$
not contained in $K_*(\e)$,
\begin{equation}\label{eq:pic-vs-disc}
  \#\Pic(\cO) \ge c(\e) \disc(\cO)^{1/2-\e} \qquad\text{for any }\e>0.
\end{equation}
For a proof see \cite[Equation~(17)]{bk:linear}. We note that in this
reference the authors give an explicit constant with $\e=1/12$, but it
is clear that the proof extends for any $\e>0$.

Finally, for definiteness of our notation we set $\e_*=0.01$ and
$K_*=K(\e_*)$ .

\subsection{Galois group action for the intersection of two ring class
  fields}

Following \cite{kuhne:intersection}, for an imaginary quadratic field
$K$ we denote by $K^\tf:=\cup_{f\in\N}K[f]$ the union of the ring
class fields associated to all orders in $K$ (the notation signifies
the notion of a \emph{transfer field}). Write $L=K\cdot K_*^\tf$. Then
Cohn \cite[Proposition~8.3.12]{cohn:class-fields} proves that the
Galois group $\Gal(K^\tf\cap L/K)$ is annihilated by $2$ (i.e. has
exponent at most $2$). The same result holds for any two quadratic
fields, but we require it only with $K_*$. We remark that in
\cite{kuhne:intersection} K\"uhne proves a similar result for the
intersection of $r$ different ring class fields, with the exponent $2$
replaced by $2^{r+1}$.

The following lemma will play a key role in our argument.

\begin{Lem}\label{lem:galois-size}
  Let $\cO$ be an order of $K\neq K_*$ and set
  $L=K\cdot K_*^\tf$. Then
  \begin{equation}
    [K[\cO]\cdot L:L] \ge c(\e_*)\disc(\cO)^{1/2-\e_*}.
  \end{equation}
\end{Lem}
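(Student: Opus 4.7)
The plan is a short Galois-theoretic reduction, followed by an application of Cohn's theorem and genus theory. Since $K_*^\tf/\Q$ is Galois (each ring class field being Galois over $\Q$), so is $L/K$; and $K[\cO]/K$ is Galois with group $\Pic(\cO)$. The standard compositum identity for Galois extensions gives
\begin{equation}
  [K[\cO]\cdot L : L] \;=\; [K[\cO]:K[\cO]\cap L] \;=\; \frac{\#\Pic(\cO)}{[K[\cO]\cap L:K]}.
\end{equation}
The numerator is bounded below by $c(\e)\disc(\cO)^{1/2-\e}$ for any $\e>0$ by~\eqref{eq:pic-vs-disc}, since $\cO\subset K\neq K_*$. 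The task therefore reduces to proving an upper bound of the shape $[K[\cO]\cap L:K]=O_\delta(\disc(\cO)^\delta)$.

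For this upper bound, the key observation is that $\cO$ is an order in $K$, so $K[\cO]\subset K^\tf$ and hence $K[\cO]\cap L\subset K^\tf\cap L$. Both are Galois over $K$, so $\Gal((K[\cO]\cap L)/K)$ is simultaneously a quotient of $\Pic(\cO)$ and of $\Gal((K^\tf\cap L)/K)$. Cohn's theorem cited in the text asserts that the latter has exponent at most $2$, so $\Gal((K[\cO]\cap L)/K)$ is an elementary abelian $2$-group and a quotient of $\Pic(\cO)/\Pic(\cO)^2$. Its order is therefore at most $|\Pic(\cO)/\Pic(\cO)^2|=|\Pic(\cO)[2]|$.

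Finally, $|\Pic(\cO)[2]|$ is controlled by genus theory: the 2-rank of $\Pic(\cO)$ is bounded by a constant multiple of $\omega(\disc(\cO))$, the number of distinct prime factors of the discriminant, and hence $|\Pic(\cO)[2]|=O_\delta(\disc(\cO)^\delta)$ for every $\delta>0$. For non-maximal orders one obtains this from the classical exact sequence relating $\Pic(\cO)$ to $\Pic(\cO_K)$ via a quotient of $(\cO_K/f\cO_K)^*$, together with genus theory for $\Pic(\cO_K)$. Combining the two estimates and choosing $\e+\delta\le\e_*$ yields the claim. The main point of the argument is really the identification of $\Gal((K[\cO]\cap L)/K)$ as a 2-torsion quotient of $\Pic(\cO)$; once this is done, the remaining inputs (the Siegel--Tatuzawa bound and the genus-theoretic control on 2-torsion) are classical. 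The mildly technical step is the bookkeeping for 2-torsion of $\Pic(\cO)$ when $\cO$ is non-maximal, but this is standard and can be quoted from the literature.
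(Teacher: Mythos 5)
Your proof is correct and follows essentially the same route as the paper's: the compositum identity $[K[\cO]\cdot L:L]=[K[\cO]:K[\cO]\cap L]$, the Siegel--Tatuzawa lower bound~\eqref{eq:pic-vs-disc} for $\#\Pic(\cO)$, Cohn's exponent-$2$ theorem to identify $\Gal(K[\cO]\cap L/K)$ as an exponent-$2$ quotient of $\Pic(\cO)$ (hence of size at most $\#\Pic(\cO)[2]$), and a genus-theory bound $\dim_{\F_2}\Pic(\cO)[2]\ll\omega(\disc\cO)$ to finish. The only cosmetic difference is that the paper phrases the Cohn step via the doubling map $g\mapsto 2g\colon\Gal(K[\cO]/K)\to\Gal(K[\cO]/K[\cO]\cap L)$ whose kernel is $\Pic(\cO)[2]$, and cites \cite[Prop.~6.3]{zhang:equidist} for the explicit $2$-rank bound $\le 1+2\omega(d)$ valid for non-maximal orders, where you instead invoke genus theory plus the standard exact sequence for $\Pic(\cO)\to\Pic(\cO_K)$; these are the same input.
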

\begin{proof}
  We follow some arguments of \cite{kuhne:intersection}. Consider the
  following diagram of abelian field extensions.
  \begin{figure}[h]\label{fig:fields}
    \centering
    \begin{equation*}
      \xymatrix
      {
        & {K[\cO]\cdot L} & \\
        {K[\cO]} \ar@{-}[ur] & & \ar@{-}[ul] {L}\\
        & \ar@{-}[ul] {K[\cO]\cap L} \ar@{-}[ur] & \\
        & {K} \ar@{-}[u] &
      }
    \end{equation*}
  \end{figure}
  By \cite[Theorem~VI.1.12]{lang:algebra} we have
  $[K[\cO]\cdot L:L]=[K[\cO]:K[\cO]\cap L]$. Since
  $\Gal(K[\cO]/K)\simeq\Pic(\cO)$ we have by~\eqref{eq:pic-vs-disc}
  the estimate
  \begin{equation*}
    \#\Gal(K[\cO]/K) \ge c(\e_*)d^{1/2-\e_*}, \qquad d:=\disc(\cO).
  \end{equation*}
  By the result of Cohn mentioned above, $g\to2g$ induces a group
  homomorphism $\Gal(K[\cO]/K)\to\Gal(K[\cO]/K[\cO]\cap L)$. Our claim
  will thus follow once we show that the kernel, i.e. the 2-torsion
  subgroup $\Pic(\cO)[2]$ of $\Gal(K[\cO]/K)\simeq\Pic(\cO)$, has size
  at most $c(\e_*)\disc(\cO)^{\e_*}$.

  In \cite[Proposition~6.3]{zhang:equidist} it is proved that
  $\dim_{\F_2} \Pic(\cO)[2]\le 1+2\omega(d)$ where $\omega(d)$ denotes
  the number of prime divisors of $d$. Therefore we indeed have
  \begin{equation*}
    \#\Pic(\cO)[2] = 2^{\dim_{\F_2} \Pic(\cO)[2]} \le 2^{1+2\omega(d)} \le c(\e_*)d^{\e_*}
  \end{equation*}
  where we used the elementary estimate $\omega(d)\le c(\e_*)+\e_*\log_2d$.
\end{proof}

\subsection{Equidistribution of CM-points}

Let
\begin{equation}
  \cF := \{ \tau\in\H : -1/2\le\Re\tau<1/2 \text{ and
  } |\tau|>1 \} \cup \{ \Re\tau\le0 \text{ and } |\tau|=1\}
\end{equation}
denote the standard fundamental domain for the $\SL_2(\Z)$-action on
$\H$. We equip $\H$ with the invariant measure
$\d\mu(x+iy)=\frac3\pi\d x\d y/y^2$ so $\mu(\cF)=1$. For each negative
discriminant $d$ denote by $\Lambda_d$ the set of all $\tau\in\cF$
with $\Delta(\tau)=d$, so that $\#\Lambda_d=h(d)$. In
\cite{duke:equidist} Duke proves the following equidistribution
theorem for $d$ a fundamental discriminant.

\begin{Thm*}[\protect{\cite[Theorem~1.i]{duke:equidist}}]
  Suppose $\Omega\subset\cF$ is convex with a piecewise smooth
  boundary. Then for some $\delta>0$ depending only on $\Omega$,
  \begin{equation}
    \frac{\#(\Lambda_d\cap\Omega)}{\#\Lambda_d} = \mu(\Omega)+O(|d|^{-\delta})
  \end{equation}
  where the asymptotic constant depends only $\Omega$, though
  ineffectively.
\end{Thm*}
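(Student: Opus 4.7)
The plan is to follow the now-standard route via Weyl equidistribution on the modular surface combined with subconvexity for central $L$-values.

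First, I would reduce equidistribution to bounds on Weyl sums. By smoothly bracketing $\chi_\Omega$ between test functions $\phi^\pm$ supported on an $\eta$-thickening and shrinking of $\Omega$, and then spectrally decomposing via Selberg's theorem on $X:=\SL_2(\Z)\backslash\H$, it suffices to bound, for each non-constant spectral component $\psi$ (Maass cusp form, holomorphic cusp form, or Eisenstein series on the continuous spectrum), the Weyl sum $W_\psi(d):=\sum_{\tau\in\Lambda_d}\psi(\tau)$ by $o(h(d))$ with uniform power savings and with sufficient control in the spectral parameter to sum back up to a bound on the full sum.

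Second, I would invoke the Waldspurger--Zhang--Katok--Sarnak period formula, which expresses $|W_\psi(d)|^2$, up to explicit archimedean factors, as a central value $L(\tfrac12,\psi\otimes\chi_d)$, where $\chi_d$ is the Kronecker character of the imaginary quadratic field of discriminant $d$ (in the holomorphic case this is classical, due to Maass--Zagier--Gross). The convexity bound $L(\tfrac12,\psi\otimes\chi_d)\ll_\psi |d|^{1/2+\e}$ only yields $|W_\psi(d)|\ll |d|^{1/2+\e}$, which matches Siegel's lower bound $h(d)\gg|d|^{1/2-\e}$ up to $\e$ and therefore fails to deliver any saving. The decisive input is the subconvex estimate
\[
L\bigl(\tfrac12,\psi\otimes\chi_d\bigr) \ll_\psi |d|^{1/2-2\delta}
\]
for some absolute $\delta>0$, which yields $|W_\psi(d)|\ll_\psi |d|^{1/2-\delta}$.

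Third, combining the subconvex bound with Siegel's lower bound gives $|W_\psi(d)|/h(d)\ll|d|^{-\delta+\e}$ for each spectral component. Truncating the spectral expansion of $\phi^\pm$ at eigenvalue parameter $T$, summing using the Weyl law together with standard $L^\infty$ and Sobolev bounds on cusp forms (and handling the Eisenstein contribution similarly, via Rankin--Selberg unfolding and subconvexity for $L(s,\chi_d)$), and optimizing $T$ against the smoothing scale $\eta$, produces the claimed power-saving asymptotic with a constant depending on $\Omega$ ineffectively through Siegel's lower bound. The main obstacle, and the substantive content of Duke's paper, is the subconvexity estimate itself: Duke established it by analyzing Fourier coefficients of weight $\tfrac12$ modular forms through the Shimura correspondence, reducing to amplified shifted-convolution estimates handled by Kloosterman-sum methods. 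Everything else is soft spectral reduction combined with well-known period formulas.
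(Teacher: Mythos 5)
Your proposal is correct and follows essentially the same route as the Appendix's sketch of Theorem~\ref{thm:duke-effective}: smooth approximation of $\chi_\Omega$, spectral decomposition in $L^2(Y(1))$, Weyl sums computed via Hecke/Rankin--Selberg (Eisenstein part) and Waldspurger-type period formulas (cuspidal part), subconvexity for the twisted central $L$-values, integration by parts against the Laplacian to control the spectral sum, and Siegel's (or Tatuzawa's) lower bound for $h(d)$. The paper itself merely cites Duke for this unnumbered theorem without re-proving it, but your description matches both the modern spectral/subconvexity blueprint used in the Appendix and correctly identifies Duke's original 1988 route through Fourier coefficients of half-integral weight forms, the Shimura correspondence, and Kloosterman-sum estimates.
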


Duke's result was extended in \cite{cu:equidist} to all discriminants.
The principal source of ineffectivity is the use of Siegel's
ineffective estimate (see~\secref{sec:siegel-tatuzawa}). However, if
one restricts to discriminants associated to orders not contained in
$K_*$ one can replace this by the effective result of Siegel-Tatuzawa,
leading to an effective equidistribution result. Explicitly we will
use this result in the following form. Here and below we set
$\Omega_R:=\cF\cap\{\sqrt3/2<\Im\tau<R\}$ for any $1<R<\infty$.

\begin{Thm}\label{thm:duke-effective}
  Let $1<R<\infty$. There is an effective constant $c(R,\e_*)$ such
  that for any discriminant $d>c(R,\e_*)$ not associated to an order
  contained in $K_*$,
  \begin{equation}
    \frac{\#(\Lambda_d\cap\Omega_R)}{\#\Lambda_d} \ge 1-2A, \qquad A:=\mu(\cF\setminus\Omega_R).
  \end{equation}  
\end{Thm}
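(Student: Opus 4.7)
The plan is to apply the (effective) equidistribution theorem to the specific region $\Omega_R$ and read off the stated inequality. The region $\Omega_R=\cF\cap\{\sqrt3/2<\Im\tau<R\}$ is bounded by finitely many analytic arcs and is convex, so it falls within the hypotheses of Duke's theorem (and its extension in \cite{cu:equidist} to arbitrary, not merely fundamental, discriminants). Applied to $\Omega_R$, this gives
\begin{equation*}
  \frac{\#(\Lambda_d\cap\Omega_R)}{\#\Lambda_d}=\mu(\Omega_R)+O(|d|^{-\delta})=1-A+O(|d|^{-\delta})
\end{equation*}
for some $\delta=\delta(R)>0$. Once the implicit constant is effective in terms of $R$ and $\e_*$, choosing $c(R,\e_*)$ so large that the error term is bounded by $A$ immediately yields the stated inequality $1-2A$.

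The substantive step, which is the main obstacle, is making the $O(\cdot)$ constant effective. Duke's bound is a consequence of a subconvex estimate for the associated $L$-functions together with a lower bound for $L(1,\chi_K)$; only the latter ingredient is responsible for the ineffectivity, via the use of Siegel's theorem to obtain $L(1,\chi_K)\gg|\disc\cO_K|^{-\e}$. Under the hypothesis that the order corresponding to $d$ is not contained in $K_*$, we may replace Siegel's bound by the Siegel--Tatuzawa estimate~\eqref{eq:siegel-tatuzawa}, which is effective for every $K\neq K_*(\e_*)$. Substituting this effective lower bound into Duke's argument (and likewise into the extension of \cite{cu:equidist} needed for non-fundamental $d$) produces an effective version of the equidistribution statement, with constants depending on $\e_*$ and $R$ only. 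The technical details of this substitution are delegated to Appendix~\ref{appendix}.

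Once the effective equidistribution is granted, the conclusion is mechanical: pick $c(R,\e_*)$ so that $|d|>c(R,\e_*)$ forces the effective error term to be at most $A=\mu(\cF\setminus\Omega_R)$, and the stated lower bound $\#(\Lambda_d\cap\Omega_R)/\#\Lambda_d\ge 1-2A$ follows.
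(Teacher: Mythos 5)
Your overall strategy --- substitute the effective Siegel--Tatuzawa bound for Siegel's ineffective one inside Duke's argument, then absorb the (now effective) error term into $A$ --- is precisely the paper's, and the paper likewise delegates the spectral details to the appendix. The one concrete error is the claim that $\Omega_R$ is convex: the appendix explicitly flags that $\Omega_R$ is \emph{not} convex in the hyperbolic sense, since the truncating boundary $\Im\tau=R$ is a horocycle rather than a geodesic, so the convexity hypothesis of Duke's theorem (as quoted) does not literally apply to $\Omega_R$. The appendix sidesteps this by replacing the indicator of $\Omega_R$ with a smooth $\psi$ that equals $1$ on $\Omega_{R/2}$ and is supported where $\Im\tau\le R$, then running the spectral decomposition in $L^2(Y(1))$: the constant term already gives $\mu_\psi\ge\mu(\Omega_{R/2})=1-2A$, which is where the factor $2$ comes from, and the Eisenstein and cuspidal parts are bounded effectively via subconvexity (Heath--Brown, Blomer--Harcos) together with Tatuzawa's lower bound for $\#\Lambda_d$. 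Your route to $1-2A$ is different (main term $1-A$, error absorbed into $A$) and would also work once equidistribution is effective, but as written it rests on a false convexity claim; either smooth the indicator as the paper does, or apply Duke to the complement $\cF\cap\{\Im\tau>R\}$, which \emph{is} hyperbolically convex.
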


In particular Theorem~\ref{thm:duke-effective} implies that as
$R\to\infty$, the proportion of points of $\Lambda_d$ belonging to
$\Omega_R$ tends to $1$ (for discriminants $d$ satisfying the
hypoetheses). A sketch of the proof of
Theorem~\ref{thm:duke-effective} is provided in
Appendix~\ref{appendix}.

\subsection{Effective Pila-Wilkie for $Y(1)^n$}

Let $H(\alpha)$ denote the absolute multiplicative height of the
algebraic number $\alpha$. For $Z\subset\H^n$ we denote
\begin{equation}
  Z(k,H) := \{(\tau_1,\ldots,\tau_n)\in\H^n : [\Q(\tau_i):\Q]\le k, H(\tau_i)\le H \text { for } i=1,\ldots,n\}.
\end{equation}
We denote by $Z^\alg$ the union of all connected positive-dimensional
semialgebraic sets contained in $Z$ and set
$Z^\trans:=Z\setminus Z^\alg$. The Pila-Wilkie theorem
\cite{pila-wilkie}, in the variant established in
\cite{pila:algebraic-points}, implies that for any set $Z$ which is
definable in an o-minimal structure and any $\e>0$ the estimate
$\#Z^\trans(k,H)\le C(\e,k,Z)\cdot H^\e$ holds. Note however that the
constant $C(\e,k,Z)$ is ineffective, and in the vast generality of the
Pila-Wilkie theorem it is not clear in what terms one could hope to
effectively express this constant.

In \cite{me:noetherian-pw} we establish an effective version of the
Pila-Wilkie theorem for sets defined using \emph{Noetherian functions}
restricted to compact domains. We also show that $j:\H\to\C$ is
Noetherian (with effective parameters) when restricted to any compact
subset. Let $\pi:\cF^n\to Y(1)^n$ be given coordinatewise by the
$j$-function. As a consequence of \cite{me:noetherian-pw} we have the
following.

\begin{Thm}\label{thm:effective-pw}
  Let $X\subset Y(1)^n$ be an algebraic variety and $R<\infty$. Set
  $Z:=\Omega_R^n\cap\pi^{-1}(X)$. Then
  \begin{equation}
    \#Z^\trans(2,H)\le c(R,\e,\deg X)\cdot H^\e
  \end{equation}
  with an effective constant $c(R,\e,\deg X)$.
\end{Thm}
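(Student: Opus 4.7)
The plan is to deduce Theorem~\ref{thm:effective-pw} as a direct consequence of the effective Noetherian Pila-Wilkie theorem of \cite{me:noetherian-pw}. First I would verify that $\Omega_R$ is contained in a compact subset of $\H$: since $\Omega_R\subset\cF$ forces $|\Re\tau|\le 1/2$, and we have the explicit bounds $\sqrt3/2<\Im\tau<R$, the closure of $\Omega_R$ is compact and bounded away from the cusp. On such a compact, the excerpt explicitly states that $j\colon\H\to\C$ is Noetherian with effective parameters; the parameters depend on $R$ but are otherwise uniform.

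Next I would describe $Z$ as a Noetherian set. Write $X$ as the common zero locus of finitely many polynomials $F_1,\ldots,F_m\in\C[y_1,\ldots,y_n]$ of total degree at most $\deg X$. Then
\begin{equation*}
  Z=\Omega_R^n\cap\bigcap_{k=1}^m\{(\tau_1,\ldots,\tau_n):F_k(j(\tau_1),\ldots,j(\tau_n))=0\},
\end{equation*}
so $Z$ is cut out on the (compact closure of) $\Omega_R^n$ by Noetherian functions whose Noetherian complexity — that is, the size of a chain witnessing the Noetherian property in the sense of \cite{me:noetherian-pw} — is controlled in terms of the Noetherian data of $j$ on $\Omega_R$ (depending on $R$) and the degrees of the $F_k$ (depending on $\deg X$). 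Composing the $F_k$ with the coordinate functions $j(\tau_i)$ only multiplies the relevant parameters in a way that is polynomial in $\deg X$, because each polynomial operation increases the Noetherian chain length by a fixed amount per monomial.

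I would then invoke the main theorem of \cite{me:noetherian-pw}, which states that for a set defined by Noetherian equations on a compact domain and any $\e>0$, the number of points with coordinates algebraic of degree $\le k$ and height $\le H$ lying outside the algebraic part of the set is bounded by $c\cdot H^\e$ with $c$ effectively computable in terms of the Noetherian complexity, $k$, and $\e$. Specializing to $k=2$ and combining with the complexity estimate from the previous step yields $\#Z^\trans(2,H)\le c(R,\e,\deg X)\cdot H^\e$ with an effective constant, as required.

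The main obstacle, and the step I would spend the most care on, is keeping the Noetherian complexity of the composed system $F_k\circ(j,\ldots,j)$ under effective control purely in terms of $\deg X$ (and $R$), independently of the coefficients of the $F_k$ defining $X$. This is essentially a bookkeeping exercise of applying the closure properties of Noetherian functions (sums, products, and compositions) from \cite{me:noetherian-pw}, but one must be careful that the bounds one extracts are polynomial in $\deg X$ rather than depending on the particular choice of generators for the ideal of $X$.
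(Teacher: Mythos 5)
The paper gives no proof of Theorem~\ref{thm:effective-pw}; it simply asserts the result as a direct consequence of the effective Noetherian Pila--Wilkie theorem of \cite{me:noetherian-pw}, together with the fact (also from that reference) that $j$ is Noetherian with effective parameters on compacta. Your proposal fills in exactly the implicit argument: compactness of $\overline{\Omega_R}$, cutting out $X$ by polynomials of degree at most $\deg X$ (which keeps the Noetherian complexity of the composed system controlled by $R$, $n$, and $\deg X$ alone, independently of the coefficients), and then applying the effective Noetherian Pila--Wilkie bound with $k=2$. This is the same route the paper takes, and your emphasis on the independence of the constant from the coefficients of the defining equations correctly identifies the point the paper itself flags immediately after the statement.
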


It will be important later that the constant above does not depend on
$\F_X$, nor on the heights of the coefficients of the equations
defining $X$.

\section{Estimates for positive-dimensional special subvarieties}

Let $X\subset Y(1)^n$ be an algebraic variety. Our goal in this
section is to construct a collection subvarieties
$\{X_\alpha\subset X\}$ which control $X^\sp$ in the following sense:
\begin{enumerate}
\item Each $X_\alpha$ is given up to permutation of coordinates by the
  form $\tilde X_\alpha\times V_\alpha$, where
  $\tilde X_\alpha\subset Y(1)^{k_\alpha}$ is an algebraic subvariety
  and $V_\alpha\subset Y(1)^{n-k_\alpha}$ is a strongly special
  variety.
\item Each maximal special subvariety $V\in X^\sp$ is of the form
  $\{P\}\times V_\alpha$ with $P\in\tilde X_\alpha^\spp$, for some $\alpha$.
\end{enumerate}
It is clear that constructing such a collection reduces the
computation of $X^\sp$ to the computation of
$\tilde X_\alpha^\spp$. The existence of finite collections of this
type follows easily from the results of \cite{pila:andre-oort}. Our
goal, more explicitly, is to obtain such collections
effectively. Specifically we have the following result.
  
\begin{Thm}\label{thm:Xsp-dim+}
  Let $X\subset Y(1)^n$ be an algebraic variety. There exists a
  collection $\{X_\alpha\subset X\}$ as above, with the number of
  subvarieties $X_\alpha$ and their degrees bounded by
  $\poly_n(\deg X)$.
\end{Thm}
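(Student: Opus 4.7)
The plan is as follows. A special subvariety $V \in X^\sp$ is specified by a combinatorial datum --- a partition $(S_0, S_1, \ldots, S_w)$ of $\{1,\ldots,n\}$ with chosen representatives $s_i \in S_i$ for $i \geq 1$ --- together with modular levels $N_{ij}$ and special points $p_i$ for $i \in S_0$. There are only $n^{O(n)}$ combinatorial types, so once the $N_{ij}$ are bounded effectively by a polynomial in $\deg X$, the collection is straightforward to construct: for each combinatorial type and each tuple of levels below the bound, take $V_\alpha$ to be the basic special variety defined in the coordinates $\bigcup_{i \geq 1} S_i$, and set $\tilde X_\alpha := \{P \in Y(1)^{|S_0|} : \{P\} \times V_\alpha \subset X\}$, which is algebraic by elimination with degree polynomially controlled in $\deg X$ and $\deg V_\alpha$. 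Maximality of $V \in X^\sp$ forces $P \in \tilde X_\alpha^\spp$, since any positive-dimensional special subvariety $V' \ni P$ of $\tilde X_\alpha$ would yield a strictly larger special subvariety $V' \times V_\alpha \subset X$.

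The heart of the argument is therefore to bound the modular levels $N_{ij}$. Lifting via the universal cover $\pi \: \H^n \to Y(1)^n$ (given coordinatewise by $j$), each component of the preimage of a basic special variety is parametrized by linear relations $\tau_j = g_{ij} \cdot \tau_{s_i}$ with $g_{ij} \in \GL_2^+(\Q)$ of determinant $N_{ij}$. The condition that such a linear image of $\H^w$ lies inside $\pi^{-1}(X)$, after encoding $j$ via the rational vector field arising from its third-order algebraic ODE, translates to the identical vanishing of a polynomial along the trajectory of an explicit polynomial vector field on an appropriate jet space.

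To bound $\det g_{ij}$, I first relax the constraint $g_{ij} \in \GL_2^+(\Q)$ to $g_{ij} \in \GL_2(\C)$: the problem becomes a purely algebraic question amenable to Nesterenko-type multiplicity estimates for polynomial vector fields, yielding a bound polynomial in $\deg X$ and $n$. To return from $\GL_2(\C)$ to $\GL_2^+(\Q)$ without sacrificing effectivity, I invoke the Ax-Lindemann-Weierstrass-type functional transcendence results of \cite{pila:andre-oort}: any identical complex algebraic relation among the $j(\tau_i)$ must in fact arise from a rational modular correspondence, so the complex bound transfers. The main obstacle is executing this differential algebraic step cleanly --- constructing the appropriate jet-space vector field, establishing a multiplicity estimate sharp enough to absorb the combinatorial structure of the partition, and extracting the final bound in the stated polynomial form. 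Once this is in place, the remainder is elementary enumeration and effective elimination.
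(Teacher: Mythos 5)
Your strategy shares its core ingredients with the paper's: encode $j$ via the rational vector field attached to its third-order ODE, apply multiplicity estimates in the jet space, and invoke Pila's Ax--Lindemann--Weierstrass theorem to pass between $\GL_2(\C)$ and $\GL_2^+(\Q)$. Your decomposition step --- enumerate combinatorial types, define $\tilde X_\alpha := \{P : \{P\}\times V_\alpha\subset X\}$ by effective elimination, and use maximality of $V$ to force $P\in\tilde X_\alpha^\spp$ --- is a sensible variant of the paper's construction and would work. The gap is exactly in what you call the heart of the argument: the claim that multiplicity estimates (after the $\GL_2(\C)$ relaxation and Ax--Lindemann) yield a direct polynomial bound on $\det g_{ij}$.

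What the multiplicity estimate actually produces (the paper's Lemma~\ref{lem:red-degree}) is a degree bound on $\cR_\xi W$, the algebraic locus in the jet space swept out by trajectories of $\xi$ lying in $W=\pi^{-1}(X)$; combined with Ax--Lindemann this bounds $\deg X^\qsp$ (Lemma~\ref{lem:qsp-bound}), \emph{not} $\det g_{ij}$. These are different quantities. The family of relaxed trajectories is parametrized by a continuous algebraic variety inside $\PGL_2(\C)$, and a bounded-degree subvariety of it can contain $\SL_2(\Z)$-cosets of $\GL_2^+(\Q)$-points of arbitrarily large determinant --- nothing on the algebraic side sees the arithmetic "height" $N$. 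Nor can you argue $\psi(N)\le\deg V\le\deg X^\qsp$ directly, since a maximal $V\in X^\sp$ need not be an irreducible component of $X^\qsp$.

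The paper bridges this by an inductive decomposition, which is the missing piece in your proposal. After replacing $X$ by the component of $X^\qsp$ containing $V$ --- now with controlled degree by Lemma~\ref{lem:qsp-bound}, irreducible, and equal to its own $\cdot^\qsp$ --- a countability/irreducibility argument shows that either $X=X'\times Y(1)$, or some $\Phi_N(x_i,x_j)$ vanishes identically on $X$ so the coordinate projection $X\to X'\subset Y(1)^{n-1}$ is finite of degree $\le\deg X$. Either way the ambient dimension drops, and iterating produces the collection $\{X_\alpha\}$, with the levels $N_{ij}$ bounded as a consequence of the degree control on the $X_\alpha$ (since $\psi(N)\le\deg V_\alpha\le\deg X_\alpha$). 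An argument of this kind has to be interposed between your multiplicity estimate and the "elementary enumeration and effective elimination"; no sharpening of the multiplicity estimate alone will supply the bound on $\det g_{ij}$.
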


We denote by $X^\wsp$ the union of all positive-dimensional
weakly-special subvarieties of $X$. The results of
\cite{pila:andre-oort} imply that $X^\wsp$ is Zariski closed, but do
not yield an effective estimate on its degree. We will reduce the
proof of Theorem~\ref{thm:Xsp-dim+} to the following lemma, whose
proof occupies the remainder of this section.

\begin{Lem}\label{lem:qsp-bound}
  Let $X\subset Y(1)^n$ be an algebraic variety. Then
  \begin{equation}
    \deg X^\wsp \le c(n)(\deg X)^{16n^2}.
  \end{equation}
\end{Lem}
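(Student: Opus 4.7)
The plan is to implement the differential-algebraic strategy outlined in~\secref{sec:proof-Xsp}. A positive-dimensional quasi-special subvariety $V\subset X$ is characterized, via the universal covering $\pi\colon\H^n\to Y(1)^n$, by the property that one component of $\pi^{-1}(V)$ is an affine-linear subspace of $\H^n$ cut out by relations $\tau_j=g_{ij}\tau_{s_i}$ with $g_{ij}\in\GL_2^+(\Q)$, together with fixing the coordinates indexed by $S_0$. The strategy is to replace rationality of the $g_{ij}$ by the purely algebraic condition $g_{ij}\in\GL_2(\C)$, bound the resulting algebraic locus using multiplicity estimates for polynomial vector fields, and finally return to the rational case via functional transcendence.

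First I would stratify by combinatorial type: a partition $S_0\cup S_1\cup\cdots\cup S_w$ of $\{1,\ldots,n\}$ with distinguished indices $s_i\in S_i$. There are only $c(n)$ such choices, so it suffices to treat each one separately. For a fixed type I would describe the Zariski closure $Y_\alpha\subset Y(1)^n$ of all positive-dimensional quasi-special subvarieties of $X$ of that shape. Since $j$ satisfies a third-order rational ODE (its Schwarzian equation), its graph lies on an affine variety carrying a rational vector field $\xi$ of bounded degree. In jet coordinates the coupling $\tau_j=g\tau_{s_i}$ with $g\in\GL_2(\C)$ becomes an algebraic incidence condition, and the requirement that the resulting linear family sweep out a subvariety of $X$ translates to a polynomial identity along trajectories of $\xi$. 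An effective multiplicity estimate for polynomial vector fields (of Nesterenko--Gabrielov--Khovanskii type), combined with B\'ezout and elimination of the auxiliary jet variables and of the entries of the $g_{ij}$, produces an algebraic defining equation for $Y_\alpha$ whose degree is polynomial in $\deg X$.

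The last step is to apply the Ax--Lindemann--Weierstrass theorem for the $j$-function from~\cite{pila:andre-oort}. This guarantees that a maximal complex-algebraic family of linear subspaces contained in $\pi^{-1}(X)$ is weakly special, so any relation with $g_{ij}\in\GL_2(\C)$ contributing to $Y_\alpha$ in fact descends from a relation with $g_{ij}\in\GL_2^+(\Q)$. Hence $Y_\alpha$ is exactly the Zariski closure of quasi-special subvarieties of the fixed type, $X^\qsp=\bigcup_\alpha Y_\alpha$, and summing the degree bounds over the $c(n)$ combinatorial types gives the claimed estimate.

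I expect the main obstacle to lie in the middle step: setting up the auxiliary system (jet coordinates, vector field, coupling equations, unknown matrix entries) so that a single application of B\'ezout or the multiplicity estimate yields exactly the exponent $16n^2$. The jet space has dimension $O(n)$, the vector field and algebraic coupling conditions have degrees controlled by $\deg X$ and $n$, and the precise exponent should emerge from carefully multiplying these contributions through a Nesterenko-type bound. The factor $16$ most plausibly traces back to the four entries of each matrix $g_{ij}\in\GL_2$, combined once with the ambient dimension from the jet construction and once with the number of coupling relations.
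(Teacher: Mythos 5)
Your high-level strategy matches the paper's: stratify by combinatorial type, encode the $j$-function via its third-order Schwarzian ODE, bound trajectories contained in a variety via multiplicity estimates and B\'ezout, and finally invoke Pila's Ax--Lindemann--Weierstrass to descend from $\GL_2(\C)$ to $\GL_2^+(\Q)$. But the middle of your route diverges from the paper's in a way worth flagging, and your speculation about where the $16$ comes from is not correct.

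The paper never introduces the matrix entries $g_{ij}$ as auxiliary variables, and never imposes the coupling $\tau_j = g_{ij}\tau_{s_i}$ as an incidence condition to be eliminated. Instead it works on the space $\Omega = \C_\tau\times\Omega_1\times\cdots\times\Omega_n$ with a \emph{single} time coordinate $\tau$, where for $i$ in the chosen block $S$ the factor $\Omega_i = \C^3\setminus\Sigma$ carries the jet coordinates $(y_i,\dot y_i,\ddot y_i)$ and the vector field encoding $\chi(f)=0$, while for $i\notin S$ the factor is $\C$ with zero vector field. Because the full solution set of $\chi(f)=0$ is exactly $\{j(g\cdot\tau) : g\in\PGL_2(\C)\}$ (the observation of Freitag--Scanlon), the trajectories of the product vector field $\xi$ are \emph{automatically} all curves $\tau\mapsto(j(g_i\tau))_{i\in S}$ together with constants on the remaining factors. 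The $g_i$ are absorbed into initial conditions of the flow; there is nothing to eliminate. One then applies a single multiplicity-estimate degree bound (Lemma~\ref{lem:red-degree}) to $\cR_\xi W$ with $W=\pi^{-1}(X)$, obtaining $\deg \le c(\xi)\deg(W)^{m^2}$ in ambient dimension $m$. Your plan of imposing $\tau_j=g\tau_{s_i}$ as an algebraic incidence with unknown $g$ and eliminating would need a parametric version of the multiplicity estimate (the curve depends on $g$), introduce extra variables, and if carried through would produce a worse exponent; it is not wrong but loses the main simplification.

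Consequently your account of the exponent is off. The $16$ does not come from the four entries of $\GL_2$. It comes from the ambient dimension of $\Omega$: with $|S|=q\le n$ we have $m = 1 + 3q + (n-q) = 1+2q+n \le 3n+1$, and the multiplicity-estimate bound $\deg(W)^{m^2}$ combined with $(3n+1)^2\le 16n^2$ yields the stated exponent. The ``$3$'' is the jet order of the Schwarzian ODE and the ``$+1$'' is the time variable; the matrices never enter the count. Also note the paper does not need a full partition $S_0\cup\cdots\cup S_w$: for each $P\in X^\qsp$ it suffices to record a single block $S_1$ and let the remaining coordinates be constant trajectories, which is why the union is indexed by subsets $S\subset\{1,\ldots,n\}$ rather than by partitions.

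Finally, a point your sketch glosses over but the paper makes explicit: one needs both inclusions $\pi(\cR_\xi W)\subset X^\qsp$ (this is where \cite[Theorem~6.8]{pila:andre-oort} enters) and $X^\qsp\subset\bigcup_S \pi(\cR_\xi W)$ (constructed by picking $S=S_1$ and $g_j=g_{1,j}$ with the other coordinates frozen at $P_i$). If you only argue one direction you have not identified $X^\qsp$ as a finite union of these loci and hence cannot bound its degree.
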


Our proof of Lemma~\ref{lem:qsp-bound} is based on methods of
differential equations. We begin by demonstrating a general result
providing effective bounds for the degree of the collection of
trajectories of a rational vector field that belong to a given
algebraic variety. We then describe the graph of $j(\tau)$ (and its
first two derivatives) as trajectories of a rational vector
field. Finally, we apply this result in combination with the
functional independence results of Pila \cite{pila:andre-oort} to
obtain an effective description of $X^\wsp$.

We now show how Lemma~\ref{lem:qsp-bound} implies
Theorem~\ref{thm:Xsp-dim+}.

\begin{proof}[Proof of Theorem~\ref{thm:Xsp-dim+}]
  We proceed by induction on $\dim X$.  Let $V\in X^\sp$. Then
  certainly $V\subset X^\wsp$, and replacing $X$ by the irreducible
  component of $X^\wsp$ that contains $V$ we may assume without loss
  of generality that $X$ is irreducible and $X=X^\wsp$ (the crucial
  bound on the degree in this reduction follows from
  Lemma~\ref{lem:qsp-bound}).

  Every point of $X$ is contained in a weakly-special subvariety. Every
  weakly-special subvariety $W\subset X$ satisfies one of the following
  conditions:
  \begin{enumerate}
  \item Up to reordering the coordinates, $W$ is of the form
    $W'\times Y(1)$.
  \item The equation $\Phi_N(x_i,x_j)$ vanishes identically on $W$,
    for some $N\in\N$ and $i\neq j$.
  \end{enumerate}
  If $W\subset X$ satisfies condition 1 then $X$ contains the
  trajectory of $\pd{}{x_i}$ (for some $i$) through every point of
  $W$. The set of points where this happens is easily seen to be
  Zariski closed (see e.g. Lemma~\ref{lem:red-degree}, although this
  case is completely elementary). The zeros of $\Phi_N(x_i,x_j)$ are
  certainly also Zariski closed. We thus see that $X$ is the union of
  a countable collection of Zariski closed subsets, and since $X$ is
  irrediclbe we conclude that in fact one of the conditions 1--2 holds
  identically with $W$ replaced by $X$.

  Suppose that the second conditions is satisfied identically on
  $X$. Up to reordering the coordinates we may suppose that
  $(i,j)=(n-1,n)$. Then the projection to first $n-1$ coordinates
  gives a finite map $\pi:X\to X'$ where $X'=\pi(X)$, and the degree
  of this map is bounded above by $\deg X$. We apply the inductive
  hypothesis to $X'$ to obtain a collection $\{X'_\beta\}$. We define
  the collection $\{X_\alpha\}$ to be the collection of irreducible
  components of $\pi^{-1}(X'_\beta)$ for all $\beta$. Since
  $\Phi_N(x_n,x_{n-1})$ vanishes identically on $X$ it follows that
  indeed each such component is of the required special form (where
  $x_n$ belongs to the same ``block'' as $x_{n-1}$). Since
  $\pi(V)\subset X'$ is special it follows by induction that it
  belongs to some $X'_\beta$, and consequently $V$ belongs to some
  $X_\alpha$ as required.

  If $X$ is of the form $X=X'\times Y(1)$ up to reordering the
  coordinates then the claim follows by a similar (but simpler)
  induction over dimension for $X'$.
\end{proof}

\subsection{The collection of trajectories contained in an algebraic
  variety}

Let $M=\C^m\setminus\Sigma$ where $\Sigma$ is an algebraic
hypersurface, and let $\xi$ be a vector field whose coefficients are
regular functions on $M$. Let $W$ be a subvariety of $M$. Denote by
$\cR_\xi W$ the union of all trajectories of $\xi$ that are contained
in $W$. We have the following.

\begin{Lem}\label{lem:red-degree}
  In the notation above, $\cR_\xi W\subset M$ is an algebraic
  subvariety. Its Zariski closure in $\C^m$ has degree at most
  $c(\xi)\deg(W)^{m^2}$.
\end{Lem}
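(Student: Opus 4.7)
The plan is to realize $\cR_\xi W$ as the vanishing locus of finitely many iterated Lie derivatives of generators of $I(W)$ and to bound its degree via Bezout.

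After writing $\xi=h^{-N}\hat\xi$ with $h$ a polynomial defining $\Sigma$ and $\hat\xi$ a polynomial vector field, the trajectories of $\xi$ in $M$ coincide as unparametrized curves with those of $\hat\xi$, and $L_{\hat\xi}$ raises the degree of a polynomial by at most $d=d(\xi)$. A point $p\in M$ then lies in $\cR_\xi W$ if and only if $(L_{\hat\xi}^k f)(p)=0$ for every $f\in I(W)$ and every $k\ge 0$, so the Zariski closure $\overline{\cR_\xi W}$ in $\C^m$ is cut out by the ideal $I_\infty$ generated by $I(W),\,L_{\hat\xi} I(W),\,L_{\hat\xi}^2 I(W),\ldots$. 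By Noetherianity the chain $I_k:=I(W)+L_{\hat\xi}I(W)+\cdots+L_{\hat\xi}^k I(W)$ stabilizes at some step $K$, so $\overline{\cR_\xi W}=V(I_K)$.

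To bound $K$ I would argue that the descending chain of subvarieties $V(I_0)\supseteq V(I_1)\supseteq\cdots$ admits only effectively many strict inclusions: since each $V(I_k)$ is cut out by polynomials of degree at most $\deg W+kd$, its degree is bounded via Bezout by $(\deg W+kd)^m$, and a chain of subvarieties of $\C^m$ of bounded degree can strictly decrease only boundedly many times. This yields $K\le c_1(\xi)\deg(W)^m$. The generators of $I_K$ then have degree at most $c_2(\xi)\deg(W)^m$, and a final application of Bezout in $\C^m$ gives $\deg\overline{\cR_\xi W}\le c_2(\xi)^m\deg(W)^{m\cdot m}=c(\xi)\deg(W)^{m^2}$, as claimed. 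The equality $V(I_K)\cap M=\cR_\xi W$ itself is verified pointwise: one inclusion is immediate from the definition of $I_\infty$, while the other follows from Cauchy--Kowalewska, since at a smooth point of $V(I_K)\cap M$ the tangency of $\hat\xi$ to $V(I_K)$ forces the local trajectory to remain in $V(I_K)\subseteq W$.

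The main obstacle will be the effective bound on $K$: Hilbert's basis theorem alone only gives finiteness, not a quantitative count of strict drops in the chain of $V(I_k)$. The right substitute is a Bezout-style argument showing that a subvariety can be strictly cut by new hypersurfaces of polynomially bounded degree only an effectively bounded number of times --- essentially a geometric analogue of an effective ACC for ideals of controlled degree. Getting the exponent $m^2$ rather than something much worse (such as $m^m$) rests on applying Bezout once, at the very end, to polynomials of degree $c_2(\xi)\deg(W)^m$, rather than iterating Bezout naively at each step of the chain.
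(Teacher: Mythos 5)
Your overall plan matches the paper's: write $\xi=h^{-N}\hat\xi$ to get a polynomial vector field, characterize $\cR_\xi W$ by the vanishing of all iterated Lie derivatives of a defining system for $W$, truncate to finitely many derivatives, and finish with Bezout. The final Bezout computation is also correct: generators of degree $c(\xi)\deg(W)^m$ in $\C^m$ give degree $c(\xi)^m\deg(W)^{m^2}$.

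The gap is exactly where you flag it, and it is not as easy to fill as you suggest. You need the stabilization index $K$ (equivalently, the order up to which one must take Lie derivatives) to be bounded by $c(\xi)\deg(W)^m$, and you propose deriving this from an ``effective ACC'': a chain of varieties of bounded degree can strictly decrease only boundedly many times. But the degree bound you have for $V(I_k)$ is $(\deg W+kd)^m$, which grows with $k$, so the hypothesis of your proposed lemma is not satisfied. More seriously, even a correct chain-length argument of this sort (descend on dimension; at fixed dimension, descend on the degree of the top-dimensional part; recurse on the lower-dimensional parts) produces bounds that grow like iterated exponentials or at best like $\deg(W)^{m!}$ in the number of recursion levels, not the single exponent $m$ you need. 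The reason is that when the chain drops to a lower-dimensional stratum, the degree of that stratum is controlled only by a Bezout product over all previously imposed equations, and this compounds at each dimension drop.

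What actually gives $K\le c(\xi)\deg(W)^m$ is a \emph{multiplicity estimate}: an effective bound on the maximal order of vanishing of a polynomial of degree $d$ along a trajectory of a fixed polynomial vector field, of the form $c(\xi)d^m$. This is a genuine theorem (Nesterenko, Brownawell--Masser, Gabrielov, and others; the paper cites \cite{nesterenko:mult-nonlinear,gabrielov:mult,me:mult-morse}), whose proof exploits the Leibniz-rule structure of iterated Lie derivatives and is not a formal consequence of Bezout plus Noetherianity. The paper invokes such an estimate to conclude that vanishing of $\xi^kF_\alpha$ for all $k$ is equivalent to vanishing for $k\le\mu=c(\xi)(\deg W)^m$, after which the Bezout step you describe finishes the argument. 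So your skeleton is right, but the step you call ``the main obstacle'' is the entire substance of the lemma, and the heuristic you propose in its place does not establish the required polynomial bound.
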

\begin{proof}
  Replacing $\xi$ by $f^N\xi$ where $\Sigma=(f)$ we may suppose that
  $\xi$ is a polynomial vector field (note this scalar multiplication
  does not affect the trajectory structure of $\xi$, only the time
  parametrization). Let $\{F_\alpha\}$ be a collection of polynomial
  equations of degrees at most $\deg W$ which define $W$
  set-theoretically. The condition $p\in\cR_\xi W$ is equivalent to
  the condition the derivatives $\xi^kF_\alpha$ vanish for every
  $k\in\N$ and every $\alpha$.

  It is possible to give an effective upper bound for the maximal
  order of vanishing of a polynomial along the trajectory of a
  polynomial vector field (assuming that the polynomial does not
  vanish identically on the trajectory). Such results are known as
  \emph{multiplicity estimates}, see
  e.g. \cite{nesterenko:mult-nonlinear,gabrielov:mult,me:mult-morse}. The
  sharpest such estimate presently known is as follows.
  \begin{Thm*}[\protect{\cite[Corolary~1]{me:mult-morse}}]
    Let $p$ be a non-singular point of $\xi$ and $F$ be a polynomial
    of degree $d$ in $m$ variables. Then the multiplicity of the zero
    of $F$ when restricted to the trajectory of $\xi$ through $p$,
    assuming it is finite, does not exceed
    \begin{equation}
      \mu = 2^{m+1}(d+(m-1)(\delta-1))^m
    \end{equation}
    where $\delta$ is the degree of the vector field $\xi$.
  \end{Thm*}
  Applying this to $F_\alpha$ we see that the vanishing of
  $\xi^kF_\alpha$ for every $k$ is equivalent to the vanishing of the
  $k=1,\ldots,\mu$ derivatives where $\mu=c(\xi)d^m$ for
  $d=\deg W$. We thus have a system of polynomials of degrees at most
  $c(\xi)d^m$ which define $\cR_\xi W$ set-theoretically. From this it
  is straightforward to deduce using the Bezout theorem that the
  degree of the Zariski closure of $\cR_\xi W$ in $\C^m$ is bounded as
  claimed.
\end{proof}

\subsection{The $j$-function as a trajectory of a rational vector
  field}
\label{sec:j-diff-eq}

Recall that the Schwartzian operator is defined by
\begin{equation}
  S(f) = \left(\frac{f''}{f'}\right)' - \frac12 \left(\frac{f''}{f'}\right)^2
\end{equation}
We introduce the differential operator
\begin{equation}
  \chi(f) = S(f) + R(f) (f')^2, \qquad R(f) = \frac{f^2-1968f+2654208}{2f^2(f-1728)^2}
\end{equation}
which is a third order algebraic differential operator vanishing on
Klein's j-invariant $j$ \cite[Page~20]{masser:heights}. As observed in
\cite{fs:minimality} it easy to check that the solutions of
$\chi(f)=0$ are exactly the functions of the form
$j_g(\tau):=j(g\cdot\tau)$ where $g\in\PGL_2(\C)$ acts on $\C$ in the
standard manner.

The differential equation above may be written in the form
$f'''=A(f,f',f'')$ where $A$ is a rational function. More explicitly, 
consider the ambient space $M:=\C\times\C^3\setminus\Sigma$ with
coordinates $(\tau,y,\dot y,\ddot y)$ where $\Sigma$ consists of the
zero loci of $y,y-1728$ and $\dot y$. On this space the vector field
\begin{equation}
  \xi := \pd{}\tau + \dot y \pd{}y + \ddot y\pd{}{\dot y}+A(y,\dot y,\ddot y)\pd{}{\ddot y}
\end{equation}
encodes the differential equation above, in the sense that any
trajectory is given by the graph of a function $j_g(\tau)$ and its
first two derivatives. We remark that $0,1728$ play a special role as
the critical values of the $j$-function.

\subsection{Description of $X^\wsp$ using vector fields}

We will use the following characterization of the weakly-special
varieties in terms of the $j$-function
\cite[Definition~6.7]{pila:andre-oort}
(cf.~\eqref{eq:special-variety}). Let $S_0\cup\cdots\cup S_w$ be a
partition of $\{1,\ldots,n\}$ with $S_0$ only permitted to be
empty. Let $t_i\in\H$ for $i\in S_0$. Let $s_i\in S_i$ be minimal for
$i>0$ and for each $s_i\neq j\in S_i$ choose an element
$g_{ij}\in\GL_2^+(\Q)$. Then $V$ is a $w$-dimensional weakly-special
variety if an only if it is the image under $j$ of the set
\begin{multline}\label{eq:special-variety-H}
  \{ (\tau_1,\ldots,\tau_n)\in\H^n : \tau_i=t_i, i\in S_0,\\
  y_j=g_{i,j}\cdot y_{s_i}, s_i\neq j\in S_i, i=1,\ldots, w\}.
\end{multline}
A special variety is obtained exactly when each $t_i$ for $i\in S_0$
is quadratic.

Let $S\subset\{1,\ldots,n\}$ of size $q$. We consider the ambient space
\begin{equation}
  \Omega = \C_\tau\times\Omega_1\cdots\times\Omega_n
\end{equation}
where for $i\in S$ we set $\Omega_i=\C^3\setminus\Sigma$ with $\Sigma$
as in~\secref{sec:j-diff-eq}; and for $i\not\in S$ we set
$\Omega_i=\C$. We use the coordinates $\tau$, $(y_i)_{1\le i\le n}$
and $(\dot y,\ddot y)_{i\in S}$. On this space we consider the vector
field $\xi$ given on the $\Omega_i$ with $i\in S$ factors as
in~\secref{sec:j-diff-eq}, and on remaining $\Omega_i$ factors as
zero.

The trajectories of $\xi$ are given by function of the following
form. For each $i\in S$ fix some $g_i\in\PGL_2(\C)$ and for each
$i\not\in S$ fix some $p_i\in\C$. Then the corresponding solution is
the function $\tau\to(\tau,\gamma(\tau))$ where $\gamma$ is given in
the $i\in S$ coordinates by $j(g_i\cdot\tau)$ and in the $i\not\in S$
coordinates by the constant $p_i$.

Let $X\subset Y(1)^n$ which we identify with $\C^n$ with coordinates
$y_1,\ldots,y_n$ using the $j$-function. Let $W=\pi^{-1}(X)$ where
$\pi:\Omega\to\C^n$ is the projection map. We claim that the Zariski
closure of $\pi(\cR_\xi W)$ is a subset of $X^\wsp$. Since $X^\wsp$ is
Zariski closed it is enough to prove that 
$\pi(\cR_\xi W)\subset X^\wsp$.

Let $P\in\pi(\cR_\xi W)$ and let $\gamma$ be the trajectory contained
in $W$ with $P\in\pi(\gamma)$. This means that $V$ contains a curve
given by $y_i=j(g_i\cdot\tau)$ in the $i\in S$ coordinate and by a
constant in the remaining coordinates (for $\tau$ in some open
neighborhood where the expressions above are all defined). Therefore
the preimage of $V$ in $\H^n$ (by coordinate-wise application of $j$)
contains an algebraic curve given by $\tau_i=g_i^{-1}g_j\tau_j$ for
$i,j\in S$ and by a constant on the remaining coordinates. By a result
of Pila \cite[Theorem~6.8]{pila:andre-oort} any such algebraic curve
belongs to the pre-image of a weakly-special variety contained in
$V$. Thus $P\in X^\wsp$ as claimed.

We now claim that the union of the above constructed $\pi(\cR_\xi W)$
over every choice of $S$ contains $X^\wsp$. Indeed, let
$P\in X^\wsp$. Then $X$ contains some positive-dimensional
weakly-special variety $V$ containing $p$. Let $S$ be given by the
block $S_1$ in the definition of $V$ as
in~\eqref{eq:special-variety-H}. Choose $g_{s_1}=\id$ and
$g_j=g_{1,j}$ for $s_1\neq j\in S_1$. For $i\not\in S$ choose
$p_i=P_i$. Then the trajectory $\gamma$ corresponding to this data is
contained in $W$ by definition and its projection $\pi(\gamma)$ passes
through $P$. Thus $P\in\pi(\cR_\xi W)$. The proof is now concluded by
application of Lemma~\ref{lem:red-degree} for each choice of $S$.

\section{Proofs of the main results}

In this section we give the proofs of Theorem~\ref{thm:main} and
consequently of Corollaries~\ref{cor:no-modular},~\ref{cor:dnd}
and~\ref{cor:hdnd}.

\subsection{Proof of Theorem~\ref{thm:main}}

Using Theorem~\ref{thm:Xsp-dim+} we already have the required estimate
for $\deg V$, and the general problem is reduced to producing the
requisite upper bounds for $D_*(P)$ and $D(P)$ where $P\in X^\spp$. We
fix such $P$ and denote $K_i=K_i(P),\Delta_i=\Delta_i(P)$ and
$\cO_i=\cO_i(P_i)$.

We argue first assuming $\F_X=\Q$, indicating the very minor changes
needed for the general case at the end. Denote by
$O_P:=\Gal(\bar\Q/\Q)\cdot P$ the set of all Galois conjugates of
$P$. By our assumption $O_P\subset X$. To simplify our notations we
reorder the coordinates so that $K_1,\ldots,K_m\neq K_*$ and
$K_{m+1},\ldots,K_n=K_*$. Choose $R$ so that the
$\mu(\Omega_R)\ge 1-1/(4m)$. We note that the Galois action on $O_P$
factors as a transitive action on $\Lambda_{\Delta_i}$ when restricted
to the $i$-th coordinate. Theorem~\ref{thm:duke-effective} for each
$i=1,\ldots,m$ thus implies that if $\Delta_i(P)>c(R,\e_*)$ then
\begin{equation}\label{eq:Op-equi-basic}
  \#(\O_P\cap\{P_i\in j(\Omega_R)\})\ge (1-1/2m)\cdot\#O_P.
\end{equation}
Increasing $R$ if necessary so that $\Omega_R$ also contains
\emph{every} point $\tau\in\H$ of discriminant smaller than the
(effective) constant $c(R,\e_*)$ above, we can in fact assume
that~\eqref{eq:Op-equi-basic} holds without the restriction on
$\Delta_i(P)$. We set $\Omega=\Omega_R^m$ for this $R$ and let
$\pi:\cF^m\to Y(1)^m$ be given coordinatewise by the $j$-function.

Taking intersection of~\eqref{eq:Op-equi-basic} for $j=1,\ldots,m$ we
see that
\begin{equation}\label{eq:Op-equi}
  \#(\O_P\cap[\pi(\Omega)\times Y(1)^{n-m}])\ge (1/2) \cdot\#O_P.
\end{equation}
By an averaging argument over the fibers of the projection to
$Y(1)^{n-m}$ we see that there exists a point
$Q=(Q_1,\ldots,Q_{n-m})\in K_*^\tf$ such that, with
$Y_Q:=Y(1)^m\times\{Q\}$, we have $O_P\cap Y_Q\neq\emptyset$ and
\begin{equation}
  \#(O_P\cap[\pi(\Omega)\times\{Q\}])\ge (1/2) \cdot\#(O_P\cap Y_Q).
\end{equation}
For each $i=1,\ldots,m$ let $L_i:=K_i\cdot K_*^\tf$. The Galois group
$\Gal(K_i[\cO_i]\cdot L_i/L_i]$ fixes $Q$ and hence acts on the set
$O_P\cap Y_Q$. Moreover its action is faithful on the
$i$-th coordinate since $K_i[\cO_i]$ is generated over $K_i$ by
$P_i$. It follows that
\begin{multline}\label{Op-Q-estimate}
  \#(O_P\cap[\pi(\Omega)\times\{Q\}])\ge (1/2) \cdot\#(O_P\cap Y_Q) \\
  \ge (1/2)\cdot\#\Gal(K_i[\cO_i]\cdot L_i/L_i) \ge c(\e_*)|\Delta_i|^{1/2-\e_*}
\end{multline}
where the final inequality follows from Lemma~\ref{lem:galois-size}.

Let $\Delta$ denote the maximal discriminant among
$\Delta_1,\ldots,\Delta_m$. Set $X_Q:=X\cap Y_Q$, which we view as a
subvariety of $Y(1)^m$ in the obvious way, and let
$Z:=\pi^{-1}(X_Q)$. Each point of $O_P\cap Y_Q$ corresponds to a point
of $X_Q$, and moreover since these points are Galois conjugate to $P$
none of them are contained in a special subvariety of positive
dimension in $X_Q$ (since this would also be a special subvariety of
$X$). According to \cite[Theorem~6.8]{pila:andre-oort}, the
$\pi$-preimage of each such point belongs to $Z^\trans$. According to
the proof of \cite[Proposition~5.7]{pila:andre-oort} they each have
height at most $\const\cdot\Delta$. Combined
with~\eqref{Op-Q-estimate} we conclude that
\begin{equation}
  \#(Z\cap\Omega)^\trans(2,\const\cdot|\Delta|) \ge c(\e_*)|\Delta|^{1/2-\e_*}.
\end{equation}
On the other hand, Theorem~\ref{thm:effective-pw} implies that
\begin{equation}
  \#(Z\cap\Omega)^\trans(2,\const\cdot|\Delta|) \le c(\e,\deg X)|\Delta|^\e.
\end{equation}
For say $\e=1/3$ these two competing estimates imply that
$|\Delta|<c(\deg X)$ as claimed.

If we allow our constants to depend on $\Delta_*$ then both
Lemma~\ref{lem:galois-size} and Theorem~\ref{thm:duke-effective}
become effective without any restriction on the discriminant, and the
proof above yields an estimate on $\Delta_i$ with no restriction on
$K_i$.

If $\F_X$ is an arbitrary (say normal) number field then we replace
$X$ by the union of its $\Gal(\bar\Q/\Q)$-conjugates, which is a
variety defined over $\Q$ and of degree $[\F_X:\Q]\cdot\deg X$. The
claim now follows from what was already proved (since if $P$ is not
contained in a special variety of positive dimension in $X$ then the
same is true for each of its conjugates).

\subsection{Proof of Corollary~\ref{cor:no-modular}}

The proof is similar to that of \cite[Corollary~4]{kuhne:ao2}. We
reduce to the case $\F_X=\Q$ as above. Further, using
Theorem~\ref{thm:Xsp-dim+} we immediately reduce from the case of
general $V$ to the case $V=\{P\}$ for some $P\in X^\spp$.

We first prove that the number of points $P$ satisfying the conditions
of the corollary is bounded by $c(\deg X,[\F_X:\Q])$. To see this, we
reduce the computation of $X^\spp$ to the computation of $*$-points in
$X^\spp_\alpha$ for the collection of special sections $\{X_\alpha\}$
described following the proof of Theorem~\ref{thm:main}. It is enough
to show that the number of points satisfying the conditions of the
corollary in each of these special sections is bounded by such a
constant.

If the ambient dimension of $X_\alpha$ is greater than one then none
of the $*$-points in $X_\alpha$ correspond to the points considered in
the corollary. If the ambient dimension of $X_\alpha$ is zero then it
correponds to a single point. Finally, suppose the ambient dimension
is one. If $X_\alpha$ is zero-dimensional then the total number of
points in $X_\alpha$ is bounded by $\deg X$. If $X_\alpha=Y(1)$ then
$X_\alpha^\spp=\emptyset$. 

Now let $P$ be as in the conditions of the corollary. From the above
we conclude that for $i=1,\ldots,n$, the number of Galois conjugates
of $P_i$ should be bounded by $c(\deg X,[\F_X:\Q])$. In
other words, $h(\Delta_i)<c(\deg X,[\F_X:\Q])$. The effective bound on
$|\Delta_i|$ now follows from the deep effective estimate
\begin{equation}
  h(d) \ge c(\e) (\log |d|)^{1-\e}, \qquad \forall \e>0
\end{equation}
due to Goldfeld-Gross-Zagier \cite{goldfeld,gz:heegner}.

\subsection{Proof of Corollary~\ref{cor:dnd}}

Note that we may ssume that the polynomial defining $X$ depends on all
variables, since otherwise $X^\spp$ is empty. Suppose that we are in
the case that the $P_1,\ldots,P_n$ are pairwise distinct. Suppose
without loss of generality that $\Delta_1$ is maximal among
$\Delta_i$. If $K_1\neq K_*$ then Theorem~\ref{thm:main} gives an
upper bound for $|\Delta_1|$ and we are done. Therefore assume that
$K_1=K_*$ and write $\Delta_1=f^2\Delta_*$. Following \cite{bk:linear}
we note that for every discriminant $\Delta$,
\begin{equation}
  \tau_\Delta := \frac{-b_\Delta+i\sqrt{|\Delta|}}2 \qquad \{0,1\}\ni b_\Delta\equiv\Delta\mod 2
\end{equation}
is a CM-period of discriminant $\Delta$, and moreover every other
CM-period of discriminant $\Delta$ in $\cF$ has imaginary part at most
$\Im\tau_\Delta/2$. Recall the following estimate from
\cite[Lemma~1]{bmz:ao}:
\begin{equation}
  | |j(\tau)|-e^{2\pi\Im\tau} | < 2079 \qquad \text{for every }\tau\in\bar\cF.
\end{equation}
After applying a Galois conjugation we may assume that
$P_1=j(\tau_{\Delta_1})$. From the above we deduce that
\begin{align*}
  |P_1| &\ge \const\cdot e^{\pi f |\Delta_*|^{1/2}}, \\
  |P_i| &\le c(\deg X,[\cF_X:\Q])\cdot e^{\pi(f-1)|\Delta_*|^{1/2}} \text{ for }i=2,\ldots,n.
\end{align*}
Indeed, for $K_i\neq K_*$ the estimate follows from
Theorem~\ref{thm:main}. For $K_i=K_*$, if $\Delta_i=\Delta_1$ then
since these points are distinct we have
$\Im\tau_i\le(1/2)\Im\tau_\Delta$; and otherwise
$\Delta_i\le(f-1)^2\Delta_*$. In particular we deduce that
\begin{equation}
  \frac{|P_1|}{|P_i|} \ge c(\deg X,[\cF_X:\Q]) \cdot e^{\pi |\Delta_*|^{1/2}} 
\end{equation}
for $i=2,\ldots,n$.

Let $c_0$ denote the (non-zero) absolute value of the coefficient of
$x_1^d$ in the polynomial $F$ defining $X$, and let $c_1$ denote the
maximum of the absolute values of all remaining coefficients.  It is
clear that for $|\Delta_*|$ larger than some effective function of
$c_0,c_1$ and $d$, the term $P_1^d$ becomes dominant and the equation
$F(P)=0$ cannot be satisfied. On the other hand if
$\Delta_*<c(\deg X,H(X),[\F_X:\Q])$ for such an effective function
then Theorem~\ref{thm:main} is already effective for $K_*$ as well,
thus concluding the proof.

\subsection{Proof of Corollary~\ref{cor:hdnd}}

We first show that any special subvariety $V\subset X$ is
linear. Suppose $V$ is positive-dimensional. If $V$ is not linear then
it contains a special curve which is not linear, so we may assume that
$V$ is a curve. If a coordinate function $x_i$ is contant on $V$ then
we may reduce to corresponding section of $X$ (note that $X$ remains
hdnd upon taking such a section). Similarly if $x_i-x_j\equiv0$ on $V$
then we may pass to the diagonal $x_i=x_j$. Afrer making these
reductions we may assume that each $x_i$ is non-constant on $V$ and
each $x_i-x_j$ vanishes at a finite number of points on $V$.

Let $K$ be a quadratic field. Since $V$ is a special curve with no
constant coordinates it consists of a single block $S_1$, and it
follows that $V$ contains a point $P$ where every coordinate $x_j$ is
a special point associated to $K$. Moreover by the finiteness of the
zeros of $x_i-x_j$ on $V$ we see that excluding a finite number of
fields, the coordinates of $P$ are distinct. The proof of
Corollary~\ref{cor:dnd} shows that a dnd hypersurface cannot contain
points of this type for a sufficiently large fundamental discriminant,
and we thus obtain a contradiction for an appropriate choice of $K$.

We now prove the estimate on $D(V)$. Since $V$ is linear it is given
up to reordering the coordinates by $\{P\}\times V'$ where
$P\in Y(1)^m$ and $V'\subset Y(1)^{n-m}$ is a strongly special linear
variety. In particular $V'$ contains some universally fixed special
point $Q$, say $Q:=(j(i),\ldots,j(i))$. Thus $P\in\tilde X^\spp$ where
$\tilde X$ is the section obtained by setting
$(x_{n-m+1},\ldots,x_n)=Q$. Furthermore $\tilde X\subset Y(1)^m$ is
again given by an hdnd polynomial.

The above reduces the problem of estimating $D(V)$ to the problem of
estimating $D(P)$ for $P\in X^\spp$. If the coordinates of $P$ are
pairwise distinct this follows from Corollary~\ref{cor:dnd}. If
$P_i=P_j$ then it follows by induction over dimension reducing to the
diagonal $x_i=x_j$ (noting again that $X$ remains the hdnd on the
diagonal by definition).

\appendix

\section{Duke's Theorem avoiding Tatuzawa fields (by~E.~Kowalski)}
\label{appendix}

\def\eps{\e}
\def\Hh{\H}
\def\Zz{\Z}
\def\Imag{\Im}
\def\Cc{\C}
\def\Rr{\R}

Duke's Theorem~\cite[Th. 1]{duke:equidist} for CM points states that
as $d\to+\infty$, the CM points $\Lambda_d$ of discriminant $-d$
become equidistributed in the modular curve $Y(1)$, in the
quantitative form
$$
\frac{|\Lambda_d\cap \Omega|}{|\Lambda_d|}=\mu(\Omega)+O(|d|^{-\delta})
$$
for some $\delta>0$ depending only on the domain $\Omega\subset Y(1)$,
which is assumed to be convex with piecewise smooth boundary. Here
$\mu$ is the hyperbolic area measure normalized so that $\mu(Y(1))=1$.
The constant $\delta>0$ and the implied constant depend only on
$\Omega$; the former is effective (and explicit), but the latter is
not, due to the use of Siegel's lower bound for class numbers in the
proof.

Tatuzawa~\cite[Th. 1]{tatuzawa} has given a form of Siegel's bound that,
for a given $\eps>0$, gives an effective (explicit) constant
$c(\eps)>0$ (in fact, proportional to $\eps$) such that
$$
|\Lambda_d|> c(\eps)|d|^{1/2-\eps}
$$
for all fundamental discriminants $d$ with at most one exception,
which may depend on $\eps$. We write $d_{\eps}^*$ for this exception.

Let $F$ be the standard fundamental domain of
$\SL_2(\Zz)\backslash \Hh$. We view $\Lambda_d$ as a subset of $F$.
We consider the regions
$$
\Omega_R=\{z\in F\,\mid\, \sqrt{3}/2<\Imag(z)<R\}.
$$
Note a minor issue even in a direct application of Duke's Theorem,
without consideration of effectivity: this region is \emph{not} convex
in the hyperbolic sense.

The following is however a simple deduction from the principles of the
proof, combined with Tatuzawa's Theorem.

\begin{Prop}
  Let $0<\eps<1/16$ be fixed. Let $d_{\eps}^*$ be the corresponding
  exceptional discriminant. Let $m\geq 1$ be an integer. There exists
  an effective constant $c(m,\eps)>0$ such that for $d>c(m,\eps)$ and
  for $d\not=d_{\eps}^*$, we have
$$
\frac{1}{|\Lambda_d|}|\{z\in\Lambda_d\,\mid\, z\in \Omega_{8m}\}|\geq
1-\frac{1}{4m}.
$$
\end{Prop}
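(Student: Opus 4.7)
The plan is to follow Duke's original proof of equidistribution and replace every invocation of Siegel's ineffective bound $h(d) \gg_\eps d^{1/2-\eps}$ by Tatuzawa's effective variant, which is valid for all fundamental discriminants except possibly one exceptional $d_\eps^*$. The only genuinely new point introduced by the formulation is that $\Omega_{8m}$ is not hyperbolically convex; however, up to the arc $\Im z = \sqrt{3}/2$ (which has $\mu$-measure zero) one has $F \setminus \Omega_R = \{z \in F : \Im z \ge R\}$, the standard cuspidal strip of hyperbolic area $\mu(F_R) = 3/(\pi R)$ in our normalization. Specialising to $R = 8m$ this evaluates to $3/(8\pi m) < 1/(4m)$, leaving a definite slack $\eta_m := 1/(4m) - 3/(8\pi m) > 0$ to absorb the quantitative error.

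The first step is to sandwich $\mathbf{1}_{F_R}$ between two smooth $\SL_2(\Z)$-invariant test functions $\phi_- \le \mathbf{1}_{F_R} \le \phi_+$ on $Y(1)$, valued in $[0,1]$, depending only on $\Im z$, with transition length of order $1/m$, and with $|\int_F \phi_\pm \, d\mu - \mu(F_R)| \le \eta_m/4$. The problem then reduces to the effective Weyl estimate
\begin{equation*}
  \frac{1}{|\Lambda_d|} \sum_{z \in \Lambda_d} \phi_+(z) \le \int_F \phi_+ \, d\mu + \eta_m/4
\end{equation*}
for $d$ larger than an effective threshold $c(m,\eps)$.

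The heart of the argument is the spectral expansion of $\phi_+$ on $Y(1)$ into the constant, Hecke-Maass cusp forms $u_j$, and unitary Eisenstein series $E(\cdot, 1/2+it)$. By the Waldspurger and Katok--Sarnak identities the Weyl sums $\sum_{z \in \Lambda_d} u_j(z)$ and $\sum_{z \in \Lambda_d} E(z, 1/2+it)$ are proportional to $L(1/2, u_j \otimes \chi_d)^{1/2}$ and to $L(1/2+it, \chi_d)\,\zeta(1/2+it)$ respectively; subconvexity in the $d$-aspect (Duke's bound for the cuspidal part and the classical Burgess bound $L(1/2+it, \chi_d) \ll (1+|t|)^A d^{1/4 - 1/16 + o(1)}$ for the continuous part) yields each as $C(\psi)\, d^{1/2 - \delta}$ with effective $\delta > \eps$ in the range $\eps < 1/16$, and with $C(\psi)$ polynomial in the spectral parameter of $\psi$. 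Both these subconvex bounds and Tatuzawa's lower bound $|\Lambda_d| \ge c(\eps)\, d^{1/2-\eps}$ are fully effective away from $d_\eps^*$ for the same reason: the only serious obstruction in either is a Landau--Siegel zero of $L(s,\chi_d)$, which is precisely what Tatuzawa's theorem excludes on the discriminants under consideration.

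Dividing each Weyl sum by $|\Lambda_d|$ contributes $O(d^{\eps - \delta})$ per spectral mode; a Sobolev estimate exploiting the smoothness of $\phi_+$ controls the tail of the spectrum and permits truncation at spectral height $(\log d)^A$, preserving the power saving. For $d > c(m,\eps)$ the aggregate error falls below $\eta_m/4$, so the empirical proportion $|\Lambda_d \cap F_R|/|\Lambda_d|$ stays strictly below $1/(4m)$, as required. The main technical obstacle is precisely the bookkeeping of the dependence of $C(\psi)$ on the spectral parameter so that the Sobolev truncation yields an absolutely convergent expression with the expected power saving in $d$; this is a routine but careful adaptation of standard arguments in the analytic theory of automorphic forms, and is exactly where the effectivity of each individual subconvexity bound becomes essential.
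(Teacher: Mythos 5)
Your strategy is essentially the one followed in the appendix: majorize (in your notation, minorize) the cuspidal indicator by a smooth $\SL_2(\Z)$-invariant test function whose integral is within $\eta_m$ of the cuspidal area $3/(\pi R)$, expand spectrally on $L^2(Y(1))$, express the Weyl sums over $\Lambda_d$ against Maass forms and Eisenstein series via Waldspurger/Katok--Sarnak type formulas and the classical Hecke--Zagier formula, apply effective subconvexity in the $d$-aspect, divide by Tatuzawa's lower bound for $|\Lambda_d|$, and close the spectral sum with a Sobolev estimate. The substitutions you make relative to the appendix --- Burgess (exponent $3/16$) for $L(1/2+it,\chi_d)$ where the paper uses Heath-Brown's $1/6$ bound, ``Duke's bound'' for the cuspidal central values where the paper cites Blomer--Harcos, and a truncation at spectral height $(\log d)^A$ where the paper fixes $A\approx 3$ and uses absolute convergence --- are all interchangeable and all deliver a power saving exceeding $\eps$ in the range $\eps<1/16$. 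The explicit slack $\eta_m = 1/(4m) - 3/(8\pi m)$ you introduce to absorb the smoothing loss is a cleaner bookkeeping of a step the paper handles by choosing $\psi \equiv 1$ on $\Omega_{R/2}$. So the overall plan is sound and matches the paper.

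One point should be corrected. You assert that the subconvexity bounds ``are fully effective away from $d_\eps^*$ for the same reason'' as Tatuzawa, namely that ``the only serious obstruction in either is a Landau--Siegel zero of $L(s,\chi_d)$.'' This is not accurate. The Burgess, Heath-Brown, Duke, and Blomer--Harcos subconvexity estimates for $L(1/2+it,\chi_d)$ and $L(u_j\times\chi_d,1/2)$ are effective \emph{unconditionally}, with no excluded discriminant and no connection to exceptional zeros; their proofs are via amplification, shifted convolutions, or bounds on Fourier coefficients, not via zero-free regions at $s=1$. The only step in the whole argument that is sensitive to a Siegel zero is the lower bound $L(1,\chi_d)\gg_\eps |d|^{-\eps}$ (equivalently $|\Lambda_d|\gg_\eps |d|^{1/2-\eps}$), which enters as the denominator in the normalized Weyl sums; that single step is where Tatuzawa's theorem is invoked and where the exceptional $d_\eps^*$ arises. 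The conclusion of your proof is unaffected, but this justification should be amended: effectivity of the subconvex bounds is automatic, while effectivity of the class-number lower bound is exactly what Tatuzawa buys, at the cost of one possible exception.
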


\begin{Rem}
(1)  The choice $R=8m$ is to have the hyperbolic area of $\Omega_R$ equal
  to $1-1/R$.  
\par
(2) We sketch the proof with fundamental discriminants, but proceeding
as in Clozel--Ullmo \cite{cu:equidist}, it extends to all
discriminants.
\end{Rem}

\begin{proof}[Sketch of proof]
  Recall that
  $$
  |\Lambda_d|=\frac{1}{2\pi}w_d|d|^{1/2}L(1,\chi_d)
  $$
  where $w_d$ is the number of roots of unity in the quadratic field
  with discriminant $d$, by Dirichlet's Class Number Formula (see,
  e.g.,~\cite[(22.59)]{iwaniec-kowalski}).
  
  Let $R=8m$.  Consider a smooth compactly supported function
  $\psi\colon Y(1)\to \Cc$ which satisfies $0\leq \psi\leq 1$, is
  equal to $1$ for $z\in F$ with imaginary part $\leq R/2$ (hence on
  $\Omega_{R/2}$) and vanishes for $z$ with imaginary part $\geq R$,
  and such that the partial derivatives of $\psi$ are bounded (by
  constants depending only on the order of the derivative).

  Note that the choice of such a function depends on $m$. Observe that
$$
\frac{1}{|\Lambda_d|}|\{z\in\Lambda_d\,\mid\, z\in \Omega_{R}\}| \geq
\frac{1}{|\Lambda_d|}\sum_{z\in\Lambda_d} \psi(z).
$$
Now by the spectral decomposition in $L^2(Y(1))$ (see,
e.g,~\cite[Th. 4.7, Th. 7.3]{iwaniec}), we have
$$
\psi(z)=\mu_{\psi}+\int_{0}^{+\infty}\langle
\psi,E(\cdot,1/2+it)\rangle E(z,1/2+it)dt+\sum_j \langle \psi,
u_j\rangle u_j(z)
$$
where
$$
\mu_{\psi}=\int_{F}\psi(z)d\mu(z),
$$
the functions $E(z,s)$ are the Eisenstein series for $\SL_2(\Zz)$ and
$(u_j)$ runs over an orthonormal basis of the cuspidal subspace of
$L^2(Y(1))$, which we may assume consists of Hecke eigenforms.
\par
We have
$$
\mu_{\psi}\geq \mu(\Omega_{R/2})=1-\frac{2}{R},
$$
hence
$$
\frac{1}{|\Lambda_d|}\sum_{z\in\Lambda_d} \psi(z)\geq
1-\frac{1}{4m}+\mathcal{R}
$$
where
$$
\mathcal{R}=\int_{0}^{+\infty}\langle \psi,E(\cdot,1/2+it)\rangle
\frac{1}{|\Lambda_d|}\sum_{z\in\Lambda_d}E(z,1/2+it)dt +\sum_j \langle
\psi,u_j\rangle \frac{1}{|\Lambda_d|}\sum_{z\in\Lambda_d}u_j(z).
$$
\par
A classical formula (see references in~\cite[p. 88]{duke:equidist}
or~\cite[(22.45)]{iwaniec-kowalski}) computes
$$
\frac{1}{|\Lambda_d|} \sum_{z\in\Lambda_d}E(z,1/2+it)=w_d
\frac{\zeta(1/2+it)L(\chi_d,1/2+it)}{|d|^{1/4-it/2}L(1,\chi_d)}
$$
where $w_d$ is the number of roots of unity in the quadratic field.
Combining an old result of Weyl for $\zeta(s)$ and a result of
Heath-Brown~\cite{HB}, whose proof is effective, yields upper bounds
$$
\zeta(1/2+it)\ll (1+|t|)^{1/6},\quad\quad L(\chi_d,1/2+it)\ll
|d|^{1/6+\eta}(1+|t|)^{1/6+\eta}
$$
for any $\eta>0$, where the implied constant is effective and depends
only on $\eta$.
\par
On the other hand, using the Waldspurger formula (see the discussion
of Michel and Venkatesh~\cite[(2.5)]{mv2}), one finds a formula of the
(similar) type
$$
\Bigl|\frac{1}{|\Lambda_d|}\sum_{z\in\Lambda_d}u_j(z)\Bigr|^2=\alpha
\frac{L(u_j,1/2)L(u_j\times \chi_d,1/2)}{|d|^{1/2}L(1,\chi_d)^2}
$$
(where $\alpha$ is a constant) in terms of central values of twisted
$L$-functions.  We use the subconvexity estimate of Blomer and
Harcos~\cite[Th. 2]{bh} (although we could use also that of Michel and
Venkatesh~\cite{mv}, or indeed any subconvex bound that has polynomial
control in terms of the eigenvalue of $u_j$ would suffice, and there
are many more versions): we have
$$
L(u_j\times \chi_d,1/2)\ll |d|^{3/8}(1+|t_j|)^3,\quad\quad
L(u_j,1/2)\ll (1+|t_j|)^3,
$$
where the implied constants are effective and $1/4+t_j^2$ is the
Laplace eigenvalue of the cusp form $u_j$ (we have $t_j\in\Rr$ since
it is known that there are no eigenvalues $<1/4$ for $Y(1)$).

Using ``integration by parts'', namely writing
$$
\langle \psi,u_j\rangle=\frac{1}{(1/4+t_j^2)^A}
\langle \psi,\Delta^A u_j\rangle=
\frac{1}{(1/4+t_j^2)^A}
\langle \Delta^A\psi,u_j\rangle,
$$
we obtain for any $A\geq 1$ the bound
$$
|\langle \psi,u_j\rangle|\leq \frac{1}{(1/4+t_j^2)^A}\|\Delta^A\psi\|
\ll_A \frac{R^{2A+1}}{(1+|t_j|)^{2A}}
$$
(since $\Delta^A \psi(z)$ vanishes unless $R/2\leq \Imag(z)\leq R$,
and the derivatives are bounded).  Similarly, one gets
$$
\langle \psi,E(\cdot,1/2+it)\rangle\ll \frac{R^{2A+1}}{(1+|t|)^{2A}}.
$$
for any $A>0$, where the implied constant depends on $A$ and is
effective.
\par
Taking $A$ fixed and large enough to make the integral and series
converge absolutely (e.g., $A=3$), we derive the lower bound
$$
\frac{1}{|\Lambda_d|}|\{z\in\Lambda_d\,\mid\, z\in \Omega_{8m}\}| \geq
1-\frac{2}{R}+O\Bigl(R^{2A+1}|d|^{1/2-1/16}|\Lambda_d|^{-1} \Bigr),
$$
where the implied constant is effective, and hence  for
$d\not=d^*_{\eps}$, we obtain
$$
\frac{1}{|\Lambda_d|}|\{z\in\Lambda_d\,\mid\, z\in \Omega_{R}\}| \geq
1-\frac{2}{R}+O\Bigl(R^{2A+1}|d|^{\eps-1/16} \Bigr),
$$
where the implied constant is effective. The result now follows.
\end{proof}

\bibliographystyle{plain} \bibliography{nrefs}

\begin{thebibliography}{10}

\bibitem{bk:linear}
Yuri Bilu and Lars K{\"u}hne.
\newblock Linear equations in singular moduli.
\newblock {\em IMRN}, to appear.

\bibitem{bmz:ao}
Yuri Bilu, David Masser, and Umberto Zannier.
\newblock An effective ``theorem of {A}ndr\'e'' for {$CM$}-points on a plane
  curve.
\newblock {\em Math. Proc. Cambridge Philos. Soc.}, 154(1):145--152, 2013.

\bibitem{me:mult-morse}
Gal Binyamini.
\newblock Multiplicity estimates: a {M}orse-theoretic approach.
\newblock {\em Duke Math. J.}, 165(1):95--128, 2016.

\bibitem{me:noetherian-pw}
Gal Binyamini.
\newblock Density of algebraic points on {N}oetherian varieties.
\newblock {\em Geom. Funct. Anal.}, 29(1):72--118, 2019.

\bibitem{bh}
Valentin Blomer and Gergely Harcos.
\newblock Hybrid bounds for twisted {$L$}-functions.
\newblock {\em J. Reine Angew. Math.}, 621:53--79, 2008.

\bibitem{bz:heights}
E.~Bombieri and U.~Zannier.
\newblock Heights of algebraic points on subvarieties of abelian varieties.
\newblock {\em Ann. Scuola Norm. Sup. Pisa Cl. Sci. (4)}, 23(4):779--792
  (1997), 1996.

\bibitem{cu:equidist}
Laurent Clozel and Emmanuel Ullmo.
\newblock \'equidistribution des points de {H}ecke.
\newblock In {\em Contributions to automorphic forms, geometry, and number
  theory}, pages 193--254. Johns Hopkins Univ. Press, Baltimore, MD, 2004.

\bibitem{cohn:class-fields}
Harvey Cohn.
\newblock {\em Introduction to the construction of class fields}.
\newblock Dover Publications, Inc., New York, 1994.
\newblock Corrected reprint of the 1985 original.

\bibitem{duke:equidist}
W.~Duke.
\newblock Hyperbolic distribution problems and half-integral weight {M}aass
  forms.
\newblock {\em Invent. Math.}, 92(1):73--90, 1988.

\bibitem{fs:minimality}
James Freitag and Thomas Scanlon.
\newblock Strong minimality and the {$j$}-function.
\newblock {\em J. Eur. Math. Soc. (JEMS)}, 20(1):119--136, 2018.

\bibitem{gabrielov:mult}
Andrei Gabrielov.
\newblock Multiplicity of a zero of an analytic function on a trajectory of a
  vector field.
\newblock In {\em The {A}rnoldfest ({T}oronto, {ON}, 1997)}, volume~24 of {\em
  Fields Inst. Commun.}, pages 191--200. Amer. Math. Soc., Providence, RI,
  1999.

\bibitem{goldfeld}
Dorian~M. Goldfeld.
\newblock The conjectures of {B}irch and {S}winnerton-{D}yer and the class
  numbers of quadratic fields.
\newblock pages 219--227. Ast\'erisque No. 41--42, 1977.

\bibitem{gz:heegner}
Benedict~H. Gross and Don~B. Zagier.
\newblock Heegner points and derivatives of {$L$}-series.
\newblock {\em Invent. Math.}, 84(2):225--320, 1986.

\bibitem{HB}
D.~R. Heath-Brown.
\newblock Hybrid bounds for {D}irichlet {$L$}-functions. {II}.
\newblock {\em Quart. J. Math. Oxford Ser. (2)}, 31(122):157--167, 1980.

\bibitem{iwaniec}
Henryk Iwaniec.
\newblock {\em Introduction to the spectral theory of automorphic forms}.
\newblock Biblioteca de la Revista Matem\'atica Iberoamericana. [Library of the
  Revista Matem\'atica Iberoamericana]. Revista Matem\'atica Iberoamericana,
  Madrid, 1995.

\bibitem{iwaniec-kowalski}
Henryk Iwaniec and Emmanuel Kowalski.
\newblock {\em Analytic number theory}, volume~53 of {\em American Mathematical
  Society Colloquium Publications}.
\newblock American Mathematical Society, Providence, RI, 2004.

\bibitem{kuhne:ao2}
Lars K\"uhne.
\newblock An effective result of {A}ndr\'e-{O}ort type {II}.
\newblock {\em Acta Arith.}, 161(1):1--19, 2013.

\bibitem{kuhne:intersection}
Lars K{\"u}hne.
\newblock Intersections of class fields.
\newblock {\em arXiv preprint arXiv:1709.00998}, 2017.

\bibitem{lang:algebra}
Serge Lang.
\newblock {\em Algebra}, volume 211 of {\em Graduate Texts in Mathematics}.
\newblock Springer-Verlag, New York, third edition, 2002.

\bibitem{masser:heights}
David Masser.
\newblock Heights, transcendence, and linear independence on commutative group
  varieties.
\newblock In {\em Diophantine approximation ({C}etraro, 2000)}, volume 1819 of
  {\em Lecture Notes in Math.}, pages 1--51. Springer, Berlin, 2003.

\bibitem{mv2}
Philippe Michel and Akshay Venkatesh.
\newblock Equidistribution, {$L$}-functions and ergodic theory: on some
  problems of {Y}u.\ {L}innik.
\newblock In {\em International {C}ongress of {M}athematicians. {V}ol. {II}},
  pages 421--457. Eur. Math. Soc., Z\"urich, 2006.

\bibitem{mv}
Philippe Michel and Akshay Venkatesh.
\newblock The subconvexity problem for {${\rm GL}_2$}.
\newblock {\em Publ. Math. Inst. Hautes \'Etudes Sci.}, (111):171--271, 2010.

\bibitem{nesterenko:mult-nonlinear}
Yu.~V. Nesterenko.
\newblock Estimates for the number of zeros of certain functions.
\newblock In {\em New advances in transcendence theory ({D}urham, 1986)}, pages
  263--269. Cambridge Univ. Press, Cambridge, 1988.

\bibitem{pila-wilkie}
J.~Pila and A.~J. Wilkie.
\newblock The rational points of a definable set.
\newblock {\em Duke Math. J.}, 133(3):591--616, 2006.

\bibitem{pila:algebraic-points}
Jonathan Pila.
\newblock On the algebraic points of a definable set.
\newblock {\em Selecta Math. (N.S.)}, 15(1):151--170, 2009.

\bibitem{pila:andre-oort}
Jonathan Pila.
\newblock O-minimality and the {A}ndr\'e-{O}ort conjecture for {$\mathbb C^n$}.
\newblock {\em Ann. of Math. (2)}, 173(3):1779--1840, 2011.

\bibitem{scanlon:survey}
Thomas Scanlon.
\newblock Counting special points: logic, {D}iophantine geometry, and
  transcendence theory.
\newblock {\em Bull. Amer. Math. Soc. (N.S.)}, 49(1):51--71, 2012.

\bibitem{tatuzawa}
Tikao Tatuzawa.
\newblock On a theorem of {S}iegel.
\newblock {\em Jap. J. Math.}, 21:163--178 (1952), 1951.

\bibitem{zhang:equidist}
Shou-Wu Zhang.
\newblock Equidistribution of {CM}-points on quaternion {S}himura varieties.
\newblock {\em Int. Math. Res. Not.}, (59):3657--3689, 2005.

\end{thebibliography}

\end{document}